\newcommand{\RR}{\mathbb{R}}
\newcommand{\NN}{\mathbb{N}}
\newcommand{\Ass}{Ass}
\newcommand{\CvxH}{\text{ConvexHull}}
\newcommand{\C}{\mathcal{C}}
\newcommand{\I}{\mathcal{I}}
\newcommand{\A}{\mathcal{A}}
\newcommand{\newword}[1]{\textbf{\emph{#1}}}
\newtheorem{conj}{Conjecture}[section]
\newtheorem{theorem}[conj]{Theorem}
\newtheorem{proposition}[conj]{Proposition}
\newtheorem{lemma}[conj]{Lemma}
\newtheorem{corollary}[conj]{Corollary}
\newtheorem{problem}[conj]{Problem}
\newtheorem{remark}[conj]{Remark}
\begin{document}

\title{Poset vectors and generalized permutohedra}
\author{Dorian Croitoru, SuHo Oh and Alexander Postnikov}

\date{}
\maketitle
\abstract{We show that given a poset $P$ and and a subposet $Q$, the integer points obtained by restricting linear extensions of $P$ to $Q$ can be explained via integer lattice points of a generalized permutohedron.}


\section{Introduction}

Let $\lambda = (\lambda_1,\ldots,\lambda_n)$ be a partition with at most $n$ parts. The \newword{Young diagram} of shape $\lambda$ is the set
$$D_{\lambda} = \{(i,j) \in \NN^2 | 1 \leq j \leq n, 1 \leq i \leq \lambda_j\}.$$
A \newword{Standard Young tableau} is a bijective map $T : D_{\lambda} \rightarrow \{1,\ldots,|D_{\lambda}|\}$ which is increasing along rows and down columns, i.e. $T(i,j) < T(i,j+1)$ and $T(i,j) < T(i+1,j)$ \cite{stanley2000enumerative}. Standard Young tableaus of $\lambda = (2,2,1)$ are given in Figure~\ref{fig:young}. In each tableau, the entries at boxes $(1,2)$ and $(2,2)$ are colored with red. The pairs we get from each tableau, are exactly the integer lattice points of a pentagon in the right image of Figure~\ref{fig:young}. Then one could naturally ask the following question : If we choose some arbitrary boxes inside a Young diagram, and collect the integer vectors we get from the chosen boxes for each standard Young diagram, are they the integer lattice points of some polytope?

Such questions were studied for diagonal boxes of shifted Young diagrams by the first author and the third author in  \cite{Postnikov01012009} and \cite{2008arXiv0803.2253C}. Let $\lambda = (\lambda_1,\ldots,\lambda_n)$ be a partition with at most $n$ parts. The \newword{shifted Young diagram} of shape $\lambda$ is the set
$$SD_{\lambda} = \{(i,j) \in \NN^2 | 1 \leq j \leq n, j \leq i \leq n + \lambda_j \}.$$

\begin{figure}[htbp]
	\begin{center}
	 		\includegraphics[width=0.9\textwidth]{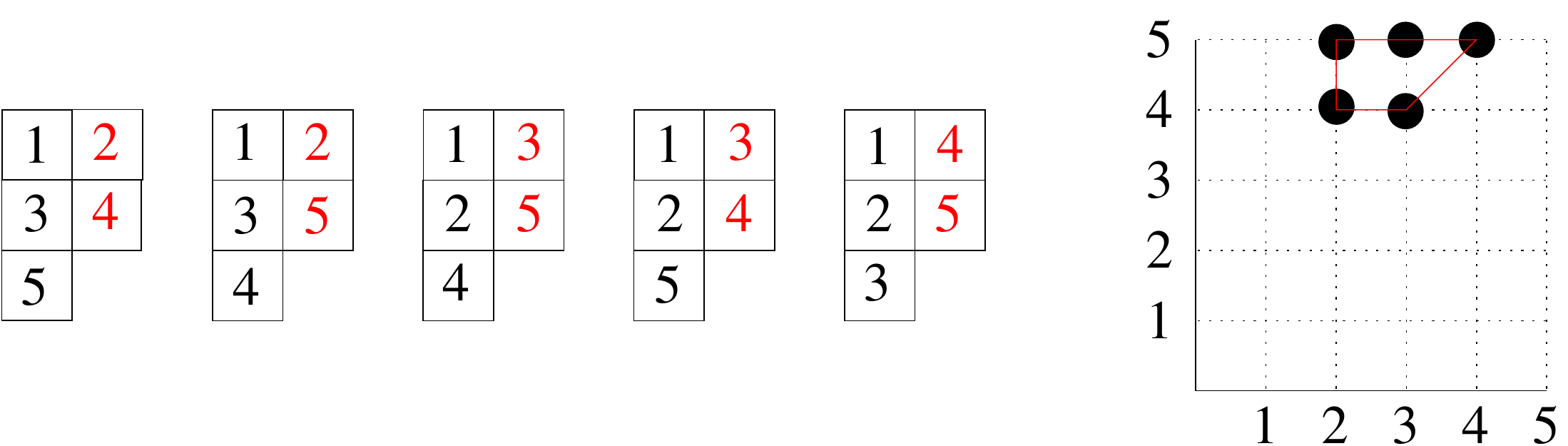}
		\caption{Example of standard Young tableaus of $\lambda = (2,2,1)$, and pairs of entries that occur at $(1,2)$ and $(2,2)$ inside the tableaus.}
		\label{fig:young}
	\end{center}
\end{figure}

We think of $SD_{\lambda}$ as a collection of boxes with $n+1-i-\lambda_i$ boxes in row $i$, such that the leftmost box of the $i$-th row is also in the $i$-th column. A \newword{shifted standard Young tableau} is a bijective map $T : SD_{\lambda} \rightarrow \{1,\ldots,|SD_{\lambda}|\}$ which is increasing along rows and down columns, i.e. $T(i,j) < T(i,j+1)$ and $T(i,j) < T(i+1,j)$. The \newword{diagonal vector} of such a tableau $T$ is $Diag(T) = (T(1,1),T(2,2),\ldots,T(n,n))$.

\begin{figure}[htbp]
	\begin{center}
	 		\includegraphics[width=0.3\textwidth]{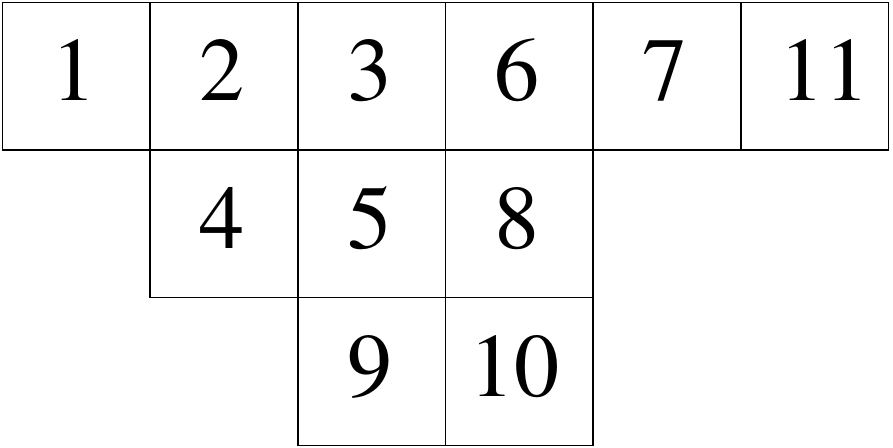}
		\caption{Example of a shifted standard Young tableau, which has diagonal vector $(1,4,9)$.}
		\label{fig:dorianex1}
	\end{center}
\end{figure}


Figure~\ref{fig:dorianex1} is a shifted standard Young tableau for $n=3, \lambda = (3,1,1)$. In \cite{Postnikov01012009}, the third author showed that when $\lambda = (0,\ldots,0)$, the diagonal vectors of $SD_{\emptyset}$ are in bijection with lattice points of $(n-1)$-dimensional \newword{associahedron} $\Ass_{n-1}$. Extending this result, the first author, in \cite{2008arXiv0803.2253C}, showed that the diagonal vectors of $SD_{\lambda}$ in general, are in bijection with lattice points of a certain deformation of the associahedron.

In this paper, we generalize the previous question for Young diagrams and the previous results for shifted Young diagrams, by looking at an arbitrary poset $P$ in general. More precisely, given an arbitrary poset $P$, a \newword{linear extension} is an order preserving bijection $\sigma : P \rightarrow [|P|]$, where $[n]$ is defined to be the set of integers $\{1,\ldots,n\}$. Let $Q$ be a subposet of $P$ and label the elements of $Q$ by $q_1,\ldots,q_{|Q|}$, such that if $q_i < q_j$ in $Q$, then $i<j$. We call a vector $(\sigma(q_1),\sigma(q_2),\ldots,\sigma(q_{|Q|}))$ obtained in such manner as the $(P,Q)$-\newword{subposet vector}.

\begin{figure}[htbp]
	\begin{center}
	 		\includegraphics[width=0.3\textwidth]{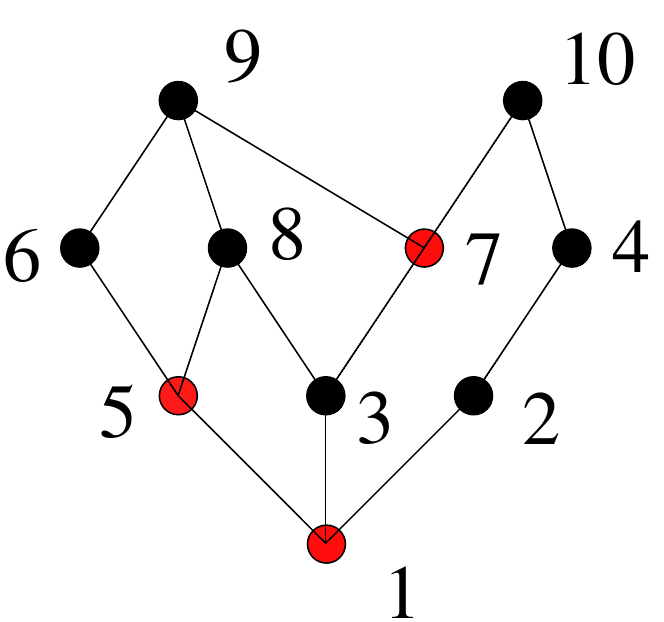}
		\caption{Example of poset $P$ and a subposet $Q$, where elements of $Q$ are colored red.}
		\label{fig:posetex1}
	\end{center}
\end{figure}

Figure~\ref{fig:posetex1} is a poset $P$ with $10$ elements. The elements of $Q$ are colored red. We label the elements of $Q$ by $q_1,q_2,q_3$ by starting from the lowest element going up. Then the $(P,Q)$-subposet vector we get in this case is $(1,5,7)$.

When we are only dealing with the linear extensions of $P$ (when $P=Q$), the connection between linear extensions of posets and generalized permutohedra has been studied in \cite{Postnikov2008} and \cite{Morton:2009:CRT:1718590.1718591}. In particular, when $Q$ is a chain, we will show that the $(P,Q)$-subposet vectors are in bijection with a certain deformation of the associahedron (generalized permutohedron). It has been shown in \cite{Stanley198156} that the number of linear extensions corresponding to a fixed $(P,Q)$-subposet vector are log-concave.

In the general case, we will show that the set of $(P,Q)$-subposet vectors can be thought as lattice points of a non-convex polytope, obtained by gluing the generalized permutohedra. In section 2, we will go over the basics of generalized permutohedra. In section 3, we will study the case when $Q$ is a chain. In section 4, we will go over the general case when $Q$ is a general subposet of $P$. In section 5, we give a nice combinatorial method to describe the vertices of the constructed polytope. 


\section{Generalized Permutohedron}

In this section, we will give an introduction to the \newword{associahedron} using \newword{generalized permutohedra} language from \cite{Postnikov01012009}. 

Associahedron, also known as the \newword{Stasheff polytope}, first appeared in the work of \cite{stasheff}. Given $n$ letters, think of inserting opening and closing parentheses so that each opening parenthesis is matched to a closing parentheses. Then the associahedron is the convex polytope in which each vertex corresponds to a way of correctly inserting the parentheses, and the edges correspond to single application of the associativity rule. But since we will be working with the integer lattice points of certain realization of an associahedron, we are going to be using a different definition using \newword{generalized permutohedra}.

The \newword{permutohedron} is the polytope obtained by the convex hull of vertices which are formed by permuting the coordinates of the vector $(1,2,\ldots,n)$. Hence the vertices correspond to each permutation of $S_n$, and the edges correspond to applying a transposition. The \newword{generalized permutohedra}, which was introduced in \cite{Postnikov01012009}, are polytopes that can be obtained by moving vertices of the usual permutohedron so that directions of all edges are preserved. It happens that for a certain class of generalized permutohedra, we can construct them using Minkowski sum of certain simplices.

Let $\Delta_{[n]} = \CvxH(e_1,\ldots,e_n)$ be the standard coordinate simplex in $\RR^n$. For a subset $I \subset [n]$, let $\Delta_I = \CvxH(e_i | i \in I)$ denote the face of $\Delta_{[n]}$. When $\I = (I_1,\ldots,I_t)$ where $I_i$'s are subsets of $[n]$, we denote $G_{\I}$ to be the Minkowski sum of $\Delta_{I_i}$'s. In other words, we have:
$$G_{\I} := \Delta_{I_1} + \cdots + \Delta_{I_t}.$$
Since the $I_i$'s do not have to be distinct, we could re-write the above sum as
$$G_{\I} := c_1\Delta_{I_1} + \cdots + c_m\Delta_{I_m},$$
where $c_i$ counts the number of times $I_i$ occurs among $\I$.


For convenience, unless otherwise stated, whenever we use the word generalized permutohedra, we will be referring to the class of polytopes that can be obtained via the construction above. Below are well known cases of generalized permutohedra. For more details, check Section 8 of \cite{Postnikov01012009}.


\newword{Permutohedron : } If we set $\I$ to consist of all possible nonempty subsets of $[n]$, then $G_{\I}$ is combinatorially equivalent to the usual permutohedron obtained by permuting the entries of point $(1,\ldots,n)$. 

\newword{Associahedron : } If we set $\I$ to consist of all possible intervals of $[n]$ (so that $[i,j]:=\{i,i+1,\ldots,j\}$ is in $\I$ for all pairs $i<j$), then $G_{\I}$ is combinatorially equivalent to the associahedron.


In this paper, we will mainly be dealing with generalized permutohedra, that can be obtained from the associahedron by deforming the facets.


\begin{figure}[htbp]
	\begin{center}
	 		\includegraphics[width=0.5\textwidth]{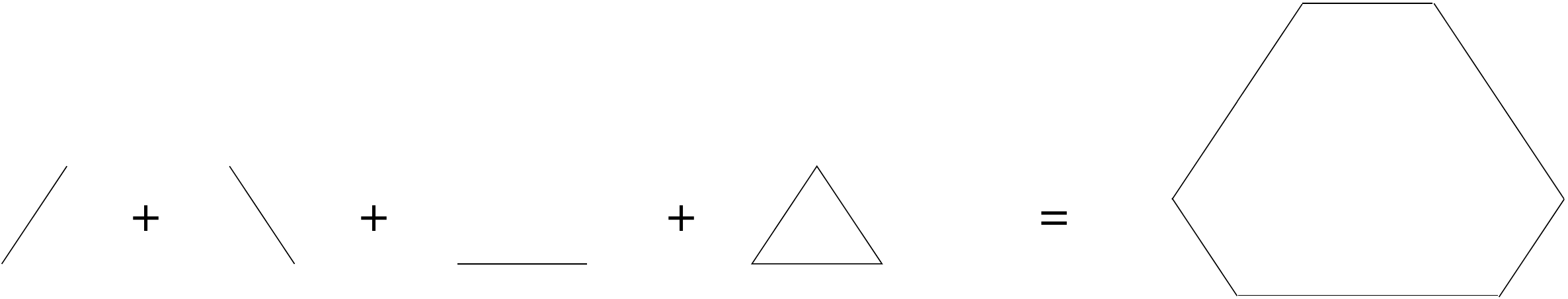}
		\caption{The permutohedron $G_{(\{1,2\},\{1,3\},\{2,3\},\{1,2,3\})}$.}
		\label{fig:ass3}
	\end{center}
\end{figure}

Figure~\ref{fig:ass3} shows an example of a permutohedron constructed by summing up all subsets of $[3]$. The terms $\Delta_{\{1\}},\Delta_{\{2\}},\Delta_{\{3\}}$ are omitted since summing points just corresponds to the translation of the polytope.

\begin{figure}[htbp]
	\begin{center}
	 		\includegraphics[width=0.5\textwidth]{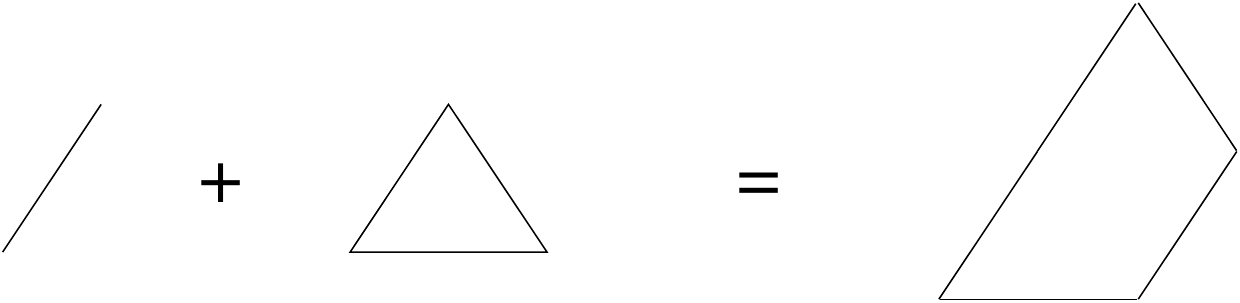}
		\caption{A deformed associahedron $G_{(\{1,2\},\{1,2,3\})}$.}
		\label{fig:gass3}
	\end{center}
\end{figure}

Figure~\ref{fig:ass3ver} shows an example of an associahedron constructed by summing $\Delta_{\{1,2\}}, \Delta_{\{2,3\}}$ and $\Delta_{\{1,2,3\}}$. Figure~\ref{fig:gass3} shows an example of a deformed associahedron constructed by summing $\Delta_{\{1,2\}}$ and $\Delta_{\{1,2,3\}}$. One can notice that the polytope in Figure~\ref{fig:gass3} can be obtained from the polytope in Figure~\ref{fig:ass3} or the polytope in Figure~\ref{fig:ass3ver} by moving around the facets.

\begin{lemma}[\cite{Postnikov01012009} Proposition 6.3]
\label{lem:facets}
Let $G_{\I}$ be a generalized permutohedron given by $c_1\Delta_{I_1} + \cdots + c_m\Delta_{I_m}$, where all $c_i$'s are positive integers. Then this polytope has the form $\{(t_1,\ldots,t_n) \in \RR^n | \sum t_i = z_{[n]}, \sum_{i \in I} t_i \leq z_I, \forall I\}$, where $z_I := \sum_{I_j \cap I \not = \emptyset} c_j$.
\end{lemma}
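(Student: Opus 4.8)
The plan is to pass to support functions, since a polytope is determined by its support function $h_K(\xi)=\max_{x\in K}\langle\xi,x\rangle$ and since support functions are additive under Minkowski sums. For the standard simplex one computes immediately $h_{\Delta_I}(\xi)=\max_{i\in I}\xi_i$, so additivity gives
\[
 h_{G_{\I}}(\xi)=\sum_{j=1}^m c_j\max_{i\in I_j}\xi_i .
\]
Writing $P$ for the polytope on the right-hand side of the statement, the whole lemma reduces to showing that $P$ has this same support function; equivalently, since $K=\{x:\langle\xi,x\rangle\le h_K(\xi)\ \forall\xi\}$, that $G_{\I}$ and $P$ cut out the same intersection of half-spaces.

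First I would dispatch the inclusion $G_{\I}\subseteq P$. Evaluating the formula above at the characteristic vector $\One_I$ of a subset $I$ gives $h_{G_{\I}}(\One_I)=\sum_{I_j\cap I\ne\emptyset}c_j=z_I$, because $\max_{i\in I_j}(\One_I)_i$ is $1$ when $I_j\cap I\ne\emptyset$ and $0$ otherwise; this is exactly the inequality $\sum_{i\in I}t_i\le z_I$. Taking $\xi=\One_{[n]}$ and $\xi=-\One_{[n]}$ pins $\sum t_i$ to the single value $z_{[n]}$, giving the defining hyperplane. Equivalently one can argue directly: a point of $G_{\I}$ is $\sum_j c_j w_j$ with $w_j\in\Delta_{I_j}$, and $\sum_{i\in I}(w_j)_i\le 1$ with the contribution vanishing unless $I_j$ meets $I$, the total summing to $z_I$.

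The substance is the reverse inclusion, for which I would show $\langle\xi,t\rangle\le h_{G_{\I}}(\xi)$ for every $t\in P$ and every $\xi$; this suffices because it places $P$ inside every supporting half-space of $G_{\I}$. Reindex so that $\xi_1\ge\xi_2\ge\cdots\ge\xi_n$ and set $S_k=\{1,\dots,k\}$, $S_0=\emptyset$, with partial sums $T_k=\sum_{i\le k}t_i$. Abel summation gives
\[
 \langle\xi,t\rangle=\sum_{k=1}^{n-1}(\xi_k-\xi_{k+1})\,T_k+\xi_n\,T_n .
\]
Now $\xi_k-\xi_{k+1}\ge 0$ and the $P$-constraints give $T_k\le z_{S_k}$ and $T_n=z_{[n]}$, so the right-hand side is at most $\sum_{k=1}^{n-1}(\xi_k-\xi_{k+1})z_{S_k}+\xi_n z_{[n]}$, which by the reverse Abel summation equals $\sum_{k=1}^n\xi_k\,(z_{S_k}-z_{S_{k-1}})$. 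The last step is to recognize this as $h_{G_{\I}}(\xi)$: the marginal $z_{S_k}-z_{S_{k-1}}$ counts, with multiplicity, exactly those $I_j$ whose $\xi$-largest element has index $k$, so that $\sum_k\xi_k(z_{S_k}-z_{S_{k-1}})=\sum_j c_j\max_{i\in I_j}\xi_i$.

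The main obstacle is this last identification together with the monotone-rearrangement set-up that makes it work: one must see that ordering the coordinates of $\xi$ turns the chain $S_0\subset S_1\subset\cdots\subset S_n$ into precisely the right test sets, and that each simplex $\Delta_{I_j}$ contributes its weight $c_j$ to the marginal at exactly the step where $S_k$ first meets $I_j$. Ties among the $\xi_i$ are harmless, since a repeated value produces a zero coefficient $\xi_k-\xi_{k+1}=0$; alternatively both sides are continuous in $\xi$, so it is enough to treat generic $\xi$. This Abel-summation argument is the combinatorial heart of the fact that $z$ is submodular with $P$ its base polytope, but it needs only the constraints of $P$ and the ordering, not submodularity invoked explicitly.
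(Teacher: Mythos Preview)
The paper does not supply a proof of this lemma at all; it is quoted verbatim as Proposition~6.3 of \cite{Postnikov01012009} and used as a black box. So there is nothing in the present paper to compare your argument against.

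That said, your proof is correct. The support-function identity $h_{G_{\I}}(\xi)=\sum_j c_j\max_{i\in I_j}\xi_i$ follows at once from additivity under Minkowski sums, and evaluating at $\One_I$ gives the forward inclusion $G_{\I}\subseteq P$ exactly as you say. For the reverse inclusion, the monotone reordering together with Abel summation is the right device: with $\xi_1\ge\cdots\ge\xi_n$ one has $z_{S_k}-z_{S_{k-1}}=\sum_{j:\,\min I_j=k}c_j$, and since $\xi_{\min I_j}=\max_{i\in I_j}\xi_i$ under this ordering, the telescoped bound $\sum_k\xi_k(z_{S_k}-z_{S_{k-1}})$ is literally $h_{G_{\I}}(\xi)$, ties or not. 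Your remark that the reindexing is harmless is justified because $P$ carries an inequality for \emph{every} subset $I$, so permuting coordinates permutes the constraint set into itself. This is essentially the classical proof that a polymatroid (base polytope of a submodular $z$) has support function $\xi\mapsto\sum_k\xi_{\sigma(k)}\bigl(z(S_k)-z(S_{k-1})\bigr)$ via the greedy order $\sigma$; you have specialized it to the particular $z_I=\sum_{I_j\cap I\ne\emptyset}c_j$, which is all the lemma needs.
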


The above lemma allows us to obtain the defining hyperplanes of a generalized permutohedron. For example, if we look at $\Delta_{\{1,2\}} + \Delta_{\{1,2,3\}}$, then this polytope is the collection of points $(t_1,t_2,t_3)$ in $\RR^3$ such that:
\begin{itemize}
\item $t_1+t_2+t_3 = 2,$
\item $t_1+t_2 \leq 2, t_1+ t_3 \leq 2,t_2+t_3 \leq 2$ and
\item $t_1 \leq 2,t_2 \leq 2 ,t_3 \leq 1.$
\end{itemize}

\section{When $Q$ is a chain}

Our goal in this section is to study the $(P,Q)$-subposet vectors when $Q$ is a chain of $P$. More precisely, we will show that there is a bijection between $(P,Q)$-subposet vectors and integer lattice points of a certain generalized permutohedron constructed from the pair $(P,Q)$. Given a $(P,Q)$-subposet vector $(c_1,\ldots,c_r)$, we are going to look at the vector $(c_1,c_2-c_1,\ldots,c_r-c_{r-1},|P|-c_r)$. We define $M_{P,Q}$ to be the collection of such vectors.

Let us denote the elements of $Q$ as $q_1,\ldots,q_r$ such that $q_1 < \cdots < q_r$ in $P$. 

\begin{remark}
\label{rem:pad}
We are going to add a minimal element $\hat{0}$ and a maximal element $\hat{1}$ to $P$. This does not change the structure of $M_{P,Q}$ or $(P,Q)$-subposet vectors, since all linear extensions would assign same numbers to $\hat{0}$ and $\hat{1}$. We will denote $\hat{0}$ and $\hat{1}$ as $q_0$ and $q_{r+1}$ for technical convenience.
\end{remark}

The following is another way to think of vectors of $M_{P,Q}$. An \newword{order ideal} of $P$ is a subset $I$ of $P$ such that if $x \in I$ and $y \leq x$, then $y \in I$. Now given a linear extension $\sigma : P \rightarrow [|P|]$, we define the order ideals $J_i$ to be the collection of elements $p \in P$ such that $\sigma(p) \leq \sigma(q_i)$ for $0 \leq i \leq r+1$. If we define $I_i$ to be $J_i \setminus J_{i-1}$ for $1 \leq i \leq r+1$, then $(|I_1|,\ldots,|I_{r+1}|)$ is an element of $M_{P,Q}$. Also, any element $(c_1,\ldots,c_{r+1})$ of $M_{P,Q}$ would actually come from some linear extension $\sigma$ and its corresponding sequence of order ideals $J_0 \subset J_1 \subset \cdots \subset J_{r+1}$.

We define the subset $B_{i,i}$ as:
$$B_{1,1} := \{p \in P | q_ 0 \leq p \leq q_1 \},$$
$$B_{i,i} := \{p \in P | q_{i-1} < p \leq q_i \}, i \not = 1.$$ 
For $i < j$, we define the set $B_{i,j}$ as:
$$B_{i,j} := \{p \in P | q_{i-1} < p < q_j, q_i \not < p, p \not < q_{j-1} \}.$$

Then we get a decomposition of $P$ into $B_{i,j}$'s for $1 \leq i \leq j \leq r+1$. Let us define the generalized permutohedron $N_{P,Q}$ as:
$$N_{P,Q} := \sum_{1 \leq i \leq j \leq r+1} |B_{i,j}| \Delta_{[i,j]}.$$

\begin{lemma}
\label{lem:chain1}
Every integer lattice point of $N_{P,Q}$ is a member of $M_{P,Q}$.
\end{lemma}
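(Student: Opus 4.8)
The plan is to read a lattice point of $N_{P,Q}$ as an assignment of the elements of $P$ to the layers $1,\dots,r+1$, and then to promote an arbitrary such assignment to a monotone one, which is exactly the data that a linear extension records. For each $x\in P$ let $[\mathrm{lo}(x),\mathrm{hi}(x)]$ denote the unique interval with $x\in B_{\mathrm{lo}(x),\mathrm{hi}(x)}$; concretely $\mathrm{lo}(x)=1+\max\{k:q_k<x\}$ and $\mathrm{hi}(x)=\min\{k:x\le q_k\}$. I would first record two elementary facts: (i) both $\mathrm{lo}$ and $\mathrm{hi}$ are weakly order preserving maps from $P$ to the chain $[r+1]$, since enlarging $x$ can only enlarge $\{k:q_k<x\}$ and shrink $\{k:x\le q_k\}$; and (ii) $q_k\in B_{k,k}$, so $\mathrm{lo}(q_k)=\mathrm{hi}(q_k)=k$. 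By Lemma~\ref{lem:facets}, a lattice point $t=(t_1,\dots,t_{r+1})$ of $N_{P,Q}$ is a nonnegative integer vector with $\sum_k t_k=z_{[r+1]}=\sum_{i\le j}|B_{i,j}|$ (the total number of elements) and, for every interval $[a,b]\subseteq[r+1]$, $\sum_{k=a}^{b}t_k\le z_{[a,b]}$, where $z_{[a,b]}=\sum_{[i,j]\cap[a,b]\ne\emptyset}|B_{i,j}|$ counts precisely the elements $x$ whose window $[\mathrm{lo}(x),\mathrm{hi}(x)]$ meets $[a,b]$.

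The goal is then to build a linear extension $\sigma$ whose layer sizes $|I_k|$ equal $t_k$, and I would do this in two stages. First I would produce a (not necessarily monotone) map $\phi:P\to[r+1]$ with $\phi(x)\in[\mathrm{lo}(x),\mathrm{hi}(x)]$ for every $x$ and $|\phi^{-1}(k)|=t_k$ for every $k$. Existence of such a $\phi$ is a transportation/Hall feasibility question: assign items (elements of $P$) with interval windows to bins (layers) having prescribed exact capacities $t_k$. Because all windows are intervals, the binding deficiency sets may be taken to be intervals, and the resulting feasibility inequalities are exactly $\sum_{k\in[a,b]}t_k\le z_{[a,b]}$ for all $[a,b]$, together with the total equality $\sum_k t_k=z_{[r+1]}$ — all of which hold because $t\in N_{P,Q}$. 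I expect this identification to be the main obstacle: verifying that the min-cut (deficiency) conditions of the assignment problem coincide with the interval facet numbers $z_{[a,b]}$ supplied by Lemma~\ref{lem:facets}, and that non-interval subsets give nothing new.

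Second, I would repair $\phi$ to be weakly order preserving without changing any fibre size. If $x<y$ in $P$ but $\phi(x)>\phi(y)$, then since $\mathrm{lo}(x)\le\mathrm{lo}(y)$ and $\mathrm{hi}(x)\le\mathrm{hi}(y)$ by (i), both values $\phi(x)$ and $\phi(y)$ lie in both windows, so exchanging $\phi(x)\leftrightarrow\phi(y)$ is legal and preserves every fibre size. Fixing once and for all a linear extension $g$ of $P$, each such exchange changes $\sum_x\phi(x)\,g(x)$ by $(\phi(x)-\phi(y))(g(y)-g(x))>0$, so the quantity strictly increases; being bounded, the procedure terminates at a monotone $\phi$.

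Finally, from a monotone $\phi$ I would set $J_k=\phi^{-1}(\{1,\dots,k\})$, which are nested order ideals by monotonicity. By (ii), $\phi(q_k)=k$, so $q_k\in J_k$; and any $x>q_k$ has $\mathrm{lo}(x)\ge k+1$, hence $\phi(x)\ge k+1$ and $x\notin J_k$, so $q_k$ is a maximal element of $J_k$. Linearly ordering each layer $\phi^{-1}(k)$ compatibly with $P$ and placing $q_k$ last then produces a linear extension $\sigma$ with $\sigma(q_k)=|J_k|$, whence $I_k=J_k\setminus J_{k-1}=\phi^{-1}(k)$ and $|I_k|=t_k$ for all $k$. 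Therefore $t\in M_{P,Q}$, which is the assertion of the lemma.
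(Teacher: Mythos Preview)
Your proof is correct and follows the same two-stage architecture as the paper's: first exhibit an assignment of the elements of $P$ to layers $1,\dots,r+1$ respecting the windows $[\mathrm{lo}(x),\mathrm{hi}(x)]$ and achieving the prescribed layer sizes $t_k$, then repair that assignment to be weakly order-preserving by swapping. The tools you use at each stage differ from the paper's. For the first stage, the paper invokes Proposition~14.12 of \cite{Postnikov01012009} (every lattice point of a Minkowski sum of coordinate simplices decomposes as a sum of lattice points of the summands), which hands you $\phi$ directly; your Hall/transportation argument is a more self-contained alternative. You can in fact shortcut it: Lemma~\ref{lem:facets} already supplies $\sum_{k\in T}t_k\le z_T$ for \emph{every} subset $T\subseteq[r+1]$, not only intervals, so the deficiency form of Hall's condition is immediate and there is no need to argue that non-interval subsets are redundant. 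For the second stage, the paper also swaps out inversions but argues termination by locating a minimal-gap, extremal inversion and checking that removing it creates no new ones; your potential-function argument with $\sum_x \phi(x)\,g(x)$ is cleaner and sidesteps that case analysis entirely.
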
 

\begin{proof}
Let $p=(p_1,\ldots,p_{r+1})$ be an integer lattice point of $N_{P,Q}$. By proposition 14.12 of \cite{Postnikov01012009}, $p$ is the sum of $p_{[i,j]}$'s, where each $p_{[i,j]}$ is an integer lattice point of $|B_{i,j}|\Delta_{[i,j]}$. Each $p_{[i,j]}$ can be expressed as $\sum_{k \in [i,j]} b_{i,j,k} e_k$, where $b_{i,j,k}$'s are nonnegative integers such that $\sum_k b_{i,j,k} = |B_{i,j}|$. We then decompose the set $B_{i,j}$ into $B_{i,j,k}$'s such that:
\begin{enumerate}
    \item for any $c$ and $d$ such that $i \leq c<d \leq j$, all elements of $B_{i,j,c}$ are smaller than any element of $B_{i,j,d}$ in $P$ and,
    \item cardinality of each $B_{i,j,k}$ is given by $b_{i,j,k}$.
\end{enumerate}

Since $p_{[i,j]} = \sum_{k \in [i,j]} b_{i,j,k} e_k$, we have $p = \sum_k \sum_{i,j} b_{i,j,k} e_k$. This tells us that $p_k = \sum_{i,j} b_{i,j,k}$ for all $k$ from $1$ to $r+1$. We define the set $I_k$ to be the union of $B_{i,j,k}$'s for all possible $i$ and $j$'s. If $\{\hat{0}\} \subset I_1 \subset I_1 \cup I_2 \subset \cdots \subset I_1 \cup \cdots \cup I_{r+1} = P$ is a chain of order ideals, then we know that $p=(|I_1|,\ldots,|I_r|,|I_{r+1}|)$ is a member of $M_{P,Q}$, due to the argument just after Remark~\ref{rem:pad}.

So we need to show that there is some way to decompose $P$ into $B_{i,j,k}$'s such that $I_1, I_1 \cup I_2,\ldots, I_1 \cup \cdots \cup I_{r+1}$ are order ideals of $P$. In other words, for any pair $(x,y)$ such that $x \in I_k$ and $y \in I_{k'}$ for $k>k'$, we must have $x \not < y$ in $P$. 

For the sake of contradiction, let us assume we do have elements $x \in B_{i,j,k}$ and $y \in B_{i',j',k'}$ such that $k > k'$ but $x < y$ in $P$. Let us call such pair $(x,y)$ an \newword{inversion}. Looking at all inversion pairs, construct a collection $\C$ by collecting all $(x,y)$'s such that $k-k'$ is minimal. And among the pairs of $\C$, find a pair $(x,y)$ such that there does not lie a $z$ such that $(z,x) \in \C$ or $(y,z) \in \C$. Now let us show that we can switch $x$ and $y$ : to put $x$ in $B_{i,j,k'}$ and $y$ in $B_{i',j',k}$ without introducing any new inversions.

We first need to show that $k,k' \in [i,j] \cap [i',j']$. The fact that $x \in B_{i,j,k}, y \in B_{i',j',k'}$ tells us that:
\begin{itemize}
\item $q_{i-1}< x \leq q_j$,
\item $q_{i'-1} < y \leq q_{j'}$,
\item $k \in [i,j]$ and
\item $k' \in [i',j']$.
\end{itemize}
We also get $q_{i-1} \leq q_{i'-1}$ and $q_j \leq q_{j'}$ from $x <y$ and the definition of $B_{i,j}$ and $B_{i',j'}$. Hence we have $i \leq i'$ and $j \leq j'$. Then $k > k'$ allows us to conclude that $k,k' \in [i',j'] \cap [i,j]$.

Next, we are going to show that this switch does not introduce any new inversions. Assume for the sake of contradiction that we get a new inversion $(z,x)$ (The proof for $(y,z)$ case is also similar and will be omitted). Since $(z,x)$ wasn't an inversion before the switch, we have $z < x$ and $z$ has to be in some $I_{k''}$ where $k \geq k'' > k'$. But since $z<x$ implies $z<y$, the minimality of $k-k'$ tells us that $k'' = k$. This implies $(z,y) \in \C$, which contradicts the condition for our choice of $(x,y)$.

By repeating the  switching process, we can get $I_1,\ldots,I_{r+1}$ such that $I_1,I_1 \cup I_2, \ldots, I_1 \cup \cdots \cup I_{r+1}$ are all order ideals. This switching process does not change the cardinality of any $I_i$ for $1 \leq i \leq r+1$, so $|I_i| = p_i$ for all $1 \leq i \leq r+1$. Hence we get the desired result that $p \in M_{P,Q}$.


\end{proof}

\begin{theorem}
\label{thm:main}
The collection $M_{P,Q}$ is exactly the set of integer lattice points of the generalized permutohedron $N_{P,Q}$.
\end{theorem}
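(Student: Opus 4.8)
The plan is to observe that Lemma~\ref{lem:chain1} already gives one of the two inclusions, namely that every integer lattice point of $N_{P,Q}$ belongs to $M_{P,Q}$, so it remains only to prove the reverse inclusion: every vector in $M_{P,Q}$ is an integer lattice point of $N_{P,Q}$. By the discussion immediately following Remark~\ref{rem:pad}, an arbitrary element of $M_{P,Q}$ can be written as $(|I_1|,\ldots,|I_{r+1}|)$, where the layers $I_k = J_k \setminus J_{k-1}$ come from a chain of order ideals $J_0 \subset J_1 \subset \cdots \subset J_{r+1} = P$ associated to a linear extension $\sigma$, with $J_k = \{p \in P \mid \sigma(p) \le \sigma(q_k)\}$. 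Since the blocks $B_{i,j}$ (for $1 \le i \le j \le r+1$) decompose $P$ and the layers $I_k$ likewise partition $P$ (placing $\hat{0}$ in $I_1$ and $\hat{1}$ in $I_{r+1}$, as in the proof of Lemma~\ref{lem:chain1}), the idea is to distribute each block $B_{i,j}$ among the coordinates in $[i,j]$ according to which layer its elements fall into.

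The key step is the claim that if $p \in B_{i,j}$ lies in layer $I_k$, then $i \le k \le j$. I would prove this directly from the order preservation of $\sigma$. The defining conditions of $B_{i,j}$ always include $q_{i-1} < p$, whence $\sigma(q_{i-1}) < \sigma(p)$, so $p \notin J_{i-1}$ and therefore $k \ge i$; dually they include $p \le q_j$ (strict when $i<j$), whence $\sigma(p) \le \sigma(q_j)$, so $p \in J_j$ and therefore $k \le j$. The diagonal blocks $B_{i,i}$ and the bottom block $B_{1,1}$ are checked the same way (using $q_0 = \hat{0}$), and the auxiliary conditions $q_i \not< p$ and $p \not< q_{j-1}$ appearing in the definition of $B_{i,j}$ are needed only to make the decomposition of $P$ disjoint, not for this bound.

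Granting the claim, I would set $b_{i,j,k} := |B_{i,j} \cap I_k|$. These are nonnegative integers that vanish unless $k \in [i,j]$, and they satisfy $\sum_{k \in [i,j]} b_{i,j,k} = |B_{i,j}|$ because the layers partition $B_{i,j}$, while $\sum_{i \le j} b_{i,j,k} = |I_k|$ because the blocks partition $P$. Consequently
$$\sum_{1 \le i \le j \le r+1}\ \Bigl(\sum_{k \in [i,j]} b_{i,j,k}\, e_k\Bigr) \;=\; \sum_{k=1}^{r+1} |I_k|\, e_k \;=\; (|I_1|,\ldots,|I_{r+1}|),$$
and for each pair $(i,j)$ the inner sum $\sum_{k \in [i,j]} b_{i,j,k}\, e_k$ is an integer lattice point of $|B_{i,j}|\Delta_{[i,j]}$. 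This realizes our vector as a Minkowski sum of one integer lattice point from each summand of $N_{P,Q} = \sum_{i \le j} |B_{i,j}|\Delta_{[i,j]}$, hence as an integer lattice point of $N_{P,Q}$, which gives the reverse inclusion and completes the theorem.

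I do not expect a serious obstacle in this direction, since the substantive work, producing a decomposition whose partial unions are genuine order ideals, is already carried out in Lemma~\ref{lem:chain1} via its switching argument; the present direction only uses monotonicity of linear extensions. The one point requiring care is the bookkeeping around the padding elements $\hat{0}$ and $\hat{1}$: one must fix the convention that assigns $\hat{0}$ to coordinate $1$ and $\hat{1}$ to coordinate $r+1$ consistently, so that the block $B_{1,1}$ and the diagonal blocks contribute to the counts in exactly the way needed for $\sum_{i \le j} b_{i,j,k} = |I_k|$ to hold on the nose.
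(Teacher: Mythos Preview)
Your argument is correct. Both your proof and the paper's rest on the same combinatorial observation---that an element of $B_{i,j}$ lying in layer $I_k$ must satisfy $i\le k\le j$---but they deploy it differently. The paper invokes the hyperplane description of $N_{P,Q}$ from Lemma~\ref{lem:facets} and checks each inequality $\sum_{t\in A}|I_t|\le \sum_{[i,j]\cap A\neq\emptyset}|B_{i,j}|$ via the set inclusion $\bigcup_{t\in A}I_t\subseteq\bigcup_{[i,j]\cap A\neq\emptyset}B_{i,j}$, which is just a repackaging of that observation. You instead build an explicit Minkowski witness by setting $b_{i,j,k}=|B_{i,j}\cap I_k|$, thereby exhibiting the vector as a sum of integer lattice points of the summands $|B_{i,j}|\Delta_{[i,j]}$. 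Your route is more constructive and self-contained (it does not need Lemma~\ref{lem:facets}, and it mirrors the decomposition used in Lemma~\ref{lem:chain1}), while the paper's route is slightly shorter because it outsources the polytope membership test to the known inequality description. The bookkeeping caveat you raise about $\hat{0}$ and $\hat{1}$ is real but harmless, exactly as you note.
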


\begin{proof}

From Lemma~\ref{lem:chain1}, all we need to do is show that any element of $M_{P,Q}$ is actually an integer lattice point of $N_{P,Q}$. 

Let $p$ be an element of $M_{P,Q}$. Using Lemma~\ref{lem:facets}, we need to show that we have
$\sum p_t = |P|$ and $\sum_{t \in A} p_t \leq \sum_{A \cap [i,j] \not = \emptyset} |B_{i,j}|$ for each subset $A$. The first equation, $\sum p_t = |P|$ follows from the definition since each point of $M_{P,Q}$ is of the form $(c_1,\ldots,c_r,|P|-\sum c_i)$. So all we are left is to show the second inequality for each subset $A$. We can write $\sum_{t \in I} p_t$ as $\sum_{t \in A} |I_t|$, and since $\cup_{t \in A} I_t \subseteq \cup_{t \in A} \cup_{t \in [i,j]} B_{i,j} \subseteq \bigcup_{A \cap [i,j] \not = \emptyset} B_{i,j}$, we get $\sum_{t \in A} |I_t| \leq \bigcup_{A \cap [i,j] \not = \emptyset} |B_{i,j}|$.
\end{proof}




Since each $(P,Q)$-subposet vector $(c_1,\ldots,c_r)$ corresponds to a point $(c_1,c_2-c_1,\ldots,c_r-c_{r-1},|P|-c_r)$ of $M_{P,Q}$, the above theorem allows us to conclude that:

\begin{corollary}
When $P$ is a poset and $Q$ is a chain in $P$, the collection of $(P,Q)$-subposet vectors are in bijection with integer lattice points of the generalized permutohedron $N_{P,Q}$.
\end{corollary}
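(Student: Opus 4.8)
The plan is to deduce the corollary directly from Theorem~\ref{thm:main}, since the only thing separating the two statements is the change of coordinates relating a $(P,Q)$-subposet vector to its associated point of $M_{P,Q}$. First I would make this change of coordinates explicit as a map $\phi$ sending a subposet vector $(c_1,\ldots,c_r)$ to the consecutive-gap vector $\phi(c_1,\ldots,c_r) = (c_1,\, c_2-c_1,\,\ldots,\, c_r-c_{r-1},\, |P|-c_r)$. By the very definition of $M_{P,Q}$, this map is surjective onto $M_{P,Q}$: the set $M_{P,Q}$ is precisely the image of the set of $(P,Q)$-subposet vectors under $\phi$.

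Next I would verify that $\phi$ is injective, so that it is in fact a bijection onto $M_{P,Q}$. This is elementary: partial summation $\psi(d_1,\ldots,d_{r+1}) = (d_1,\, d_1+d_2,\,\ldots,\, d_1+\cdots+d_r)$ is a left inverse of $\phi$, because the $i$-th partial sum of $\phi(c_1,\ldots,c_r)$ is exactly $c_i$ for each $1 \leq i \leq r$. Since $\sigma$ is order preserving and $q_1 < \cdots < q_r$, the coordinates $c_1 < \cdots < c_r$ are strictly increasing, so each entry of $\phi(c_1,\ldots,c_r)$ is a nonnegative integer (the first $r$ being positive); thus $\phi$ really does land among the integer vectors, as needed.

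Finally I would invoke Theorem~\ref{thm:main}, which identifies $M_{P,Q}$ with the set of integer lattice points of $N_{P,Q}$. Composing the bijection $\phi$ with this identification produces a bijection between the $(P,Q)$-subposet vectors and the integer lattice points of $N_{P,Q}$, which is the assertion of the corollary.

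I do not expect any genuine obstacle here, precisely because all of the substantive combinatorics has already been carried out upstream: the switching argument of Lemma~\ref{lem:chain1} shows every lattice point of $N_{P,Q}$ comes from a chain of order ideals, and the facet-inequality computation in Theorem~\ref{thm:main} handles the reverse containment. The only bookkeeping worth a second look is the coordinate count, namely the $r$ entries of a subposet vector versus the $r+1$ entries of a point of $M_{P,Q}$, the discrepancy being the slack coordinate $|P|-c_r$; and this is exactly what the padding convention of Remark~\ref{rem:pad} is designed to make transparent.
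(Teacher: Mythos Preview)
Your proposal is correct and follows essentially the same approach as the paper: the paper simply notes that each $(P,Q)$-subposet vector $(c_1,\ldots,c_r)$ corresponds to the point $(c_1,c_2-c_1,\ldots,c_r-c_{r-1},|P|-c_r)$ of $M_{P,Q}$ and then invokes Theorem~\ref{thm:main}. Your write-up just makes the invertibility of this correspondence explicit via partial summation, which is a harmless elaboration of the same one-line argument.
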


Actually, we can say a bit more about $(P,Q)$-subposet vectors. Let us define $\Delta'_{I}$ to be the simplex obtained by sending each point $(x_1,\cdots,x_{r+1})$ of a simplex $\Delta_I$ in $\RR^{r+1}$ to $(x_1,x_1+x_2,\ldots,x_1+\cdots+x_r)$ in $\RR^r$. In other words,
\begin{itemize}
\item if $r+1 \not \in I$, then $\Delta'_I$ is the convex hull of $e_1+ \cdots + e_i$'s for $i \in I$ and,
\item if $r+1 \in I$, then $\Delta'_I$ is the convex hull of the origin and  $e_1+ \cdots + e_i$'s for $i \in I \setminus \{r+1\}$.
\end{itemize}

Then we can describe the set of $(P,Q)$-subposet vectors more precisely:

\begin{corollary}
\label{cor:main}
The $(P,Q)$-subposet vectors are exactly the integer lattice points of the polytope
$$N'_{P,Q} := \sum_{1 \leq i \leq j \leq r+1} |B_{i,j}| \Delta'_{[i,j]}$$.
\end{corollary}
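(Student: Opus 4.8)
The plan is to realize the entire statement as the image of Theorem~\ref{thm:main} under the single linear map that turns a difference vector back into a subposet vector. Concretely, let $\phi : \RR^{r+1} \to \RR^r$ be the partial-sum map $\phi(x_1,\ldots,x_{r+1}) = (x_1,\, x_1+x_2,\, \ldots,\, x_1+\cdots+x_r)$; this is exactly the map used to define $\Delta'_I = \phi(\Delta_I)$. Since $\phi$ is linear it commutes with Minkowski sums and with dilation, so applying it termwise to $N_{P,Q} = \sum_{1\le i\le j\le r+1} |B_{i,j}|\,\Delta_{[i,j]}$ yields $\phi(N_{P,Q}) = \sum_{1\le i\le j\le r+1} |B_{i,j}|\,\phi(\Delta_{[i,j]}) = \sum_{1\le i\le j\le r+1} |B_{i,j}|\,\Delta'_{[i,j]} = N'_{P,Q}$. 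Thus $N'_{P,Q}$ is literally the image polytope $\phi(N_{P,Q})$, and the task reduces to comparing the lattice points of $N_{P,Q}$ with those of its image.

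First I would record that $N_{P,Q}$ lives in the affine hyperplane $H = \{x \in \RR^{r+1} : \sum_i x_i = |P|\}$, because every point of $\Delta_{[i,j]}$ has coordinate sum $1$ and $\sum_{1\le i\le j\le r+1} |B_{i,j}| = |P|$ as the $B_{i,j}$ partition $P$. The crucial observation is then that the restriction $\phi|_H : H \to \RR^r$ is not just linear but an affine \emph{lattice} isomorphism. Its inverse is the difference map $\psi(c_1,\ldots,c_r) = (c_1,\, c_2-c_1,\,\ldots,\, c_r-c_{r-1},\, |P|-c_r)$, and both $\phi|_H$ and $\psi$ send integer points to integer points; hence $\phi|_H$ restricts to a bijection between $\ZZ^{r+1} \cap H$ and $\ZZ^r$. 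Because a lattice isomorphism carries the lattice points of a polytope contained in $H$ exactly onto the lattice points of its image, we obtain
$$\ZZ^r \cap N'_{P,Q} = \ZZ^r \cap \phi(N_{P,Q}) = \phi\big(\ZZ^{r+1} \cap N_{P,Q}\big).$$

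To finish, I would invoke Theorem~\ref{thm:main}, which identifies $\ZZ^{r+1} \cap N_{P,Q}$ with $M_{P,Q}$, and recall that a $(P,Q)$-subposet vector $(c_1,\ldots,c_r)$ corresponds to the difference vector $(c_1,\, c_2-c_1,\,\ldots,\,|P|-c_r) \in M_{P,Q}$; that is, $\phi$ sends each element of $M_{P,Q}$ precisely to the subposet vector it came from (this is exactly $\psi^{-1} = \phi|_H$). Chaining these identifications shows that the lattice points of $N'_{P,Q}$ are exactly the $(P,Q)$-subposet vectors.

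The one point that deserves care — and the only place the argument could go wrong — is the claim that no \emph{new} lattice points are created when passing to the image $\phi(N_{P,Q})$. In general the image of a lattice polytope under a linear surjection can contain lattice points not coming from lattice points upstairs, so the proof genuinely relies on $\phi|_H$ being a unimodular (lattice-preserving) bijection onto $\RR^r$ rather than a mere linear surjection; verifying that the explicit inverse $\psi$ is integer-valued is what makes this rigorous and is the heart of the argument.
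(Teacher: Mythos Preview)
Your proof is correct and follows essentially the same approach as the paper: both derive the corollary from Theorem~\ref{thm:main} via the observation that the partial-sum/difference maps between $(c_1,\ldots,c_r)$ and $(c_1,c_2-c_1,\ldots,|P|-c_r)$ are mutually inverse and integrality-preserving. The paper compresses this into a single sentence (``all entries of $(c_1,\ldots,c_r)$ are integers if and only if all entries of $(c_1,c_2-c_1,\ldots,c_r-c_{r-1},|P|-c_r)$ are integers''), whereas you spell out explicitly that $\phi|_H$ is a unimodular affine isomorphism and that $\phi$ commutes with Minkowski sums to give $N'_{P,Q}=\phi(N_{P,Q})$ --- details the paper leaves implicit.
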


\begin{proof}
All entries of $(c_1,\ldots,c_r)$ are integers if and only if all entries of $(c_1,c_2-c_1,\ldots,c_r-c_{r-1},|P|-c_r)$ are integers. So the claim follows from Theorem~\ref{thm:main}.
\end{proof}

Let us end with an example. Consider a poset $P$ given by Figure~\ref{fig:posetNex}. The chain $Q$ is chosen as the elements labeled $a$ and $b$. We label $\hat{0} = q_0, a = q_1, b = q_2, \hat{1} = q_3$. If we restrict all possible linear extensions of $P$ to $q_1$ and $q_2$, we get integer vectors $(2,4),(2,5),(3,4),(3,5),(4,5)$. These points are exactly the $(P,Q)$-subposet vectors. We have $\hat{0},a \in B_{1,1}, b \in B_{2,2}, \hat{1} \in B_{3,3}, y \in B_{1,2}, z \in B_{1,3}$. So $N'_{P,Q} = 2 \Delta_{[1]} + 1 \Delta_{[2]} + \Delta_{[3]} + \Delta_{[1,2]} + \Delta_{[1,3]}$ and this gives us a pentagon, where all integer lattice points are exactly the elements of $M_{P,Q}$.

\begin{figure}[htbp]
	\begin{center}
	 		\includegraphics[width=1.0\textwidth]{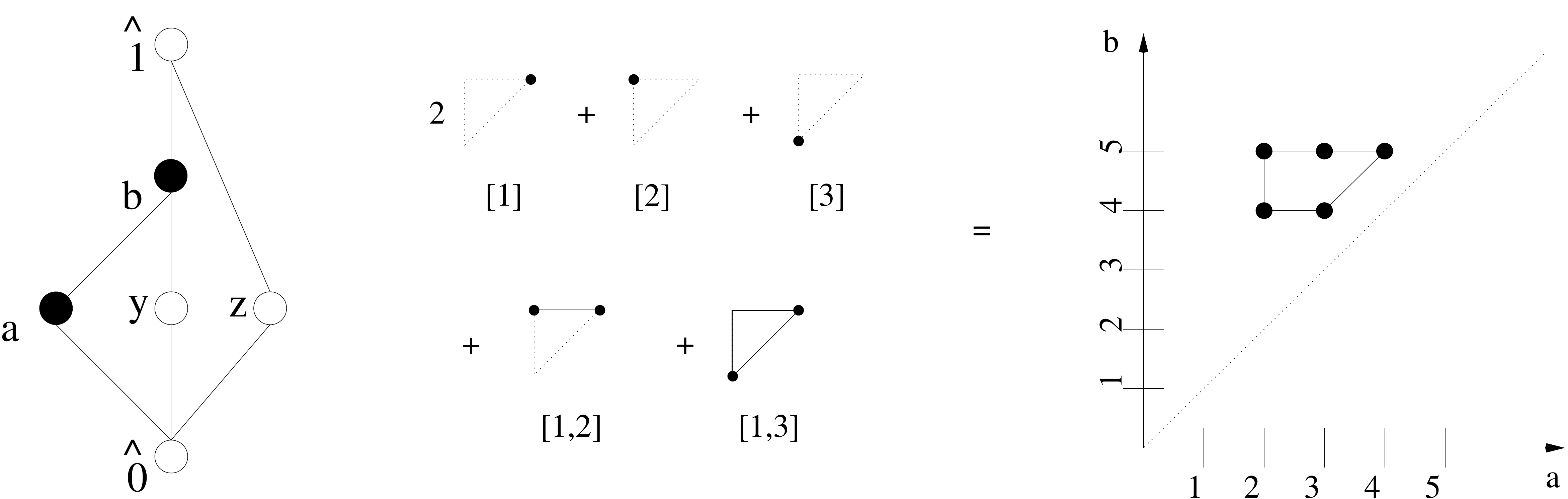}
		\caption{A poset $P$ and chain $Q$ given by the black-colored elements. The summands of $N'_{P,Q}$ and the polytope $N'_{P,Q}$ with its integer lattice points. }
		\label{fig:posetNex}
	\end{center}
\end{figure}

\section{For $Q$ in general}

In this section, we are going to study the $(P,Q)$-subposet vectors when $Q$ is not necessarily a chain of $P$. The $(P,Q)$-subposet vectors are integer lattice points in a union of polytopes combinatorially equivalent to  generalized permutohedron. Then we are going to show that there is a nonconvex, contractible polytope that can be obtained by gluing those polytopes nicely.

\begin{figure}[htbp]
	\begin{center}
	 		\includegraphics[width=0.6\textwidth]{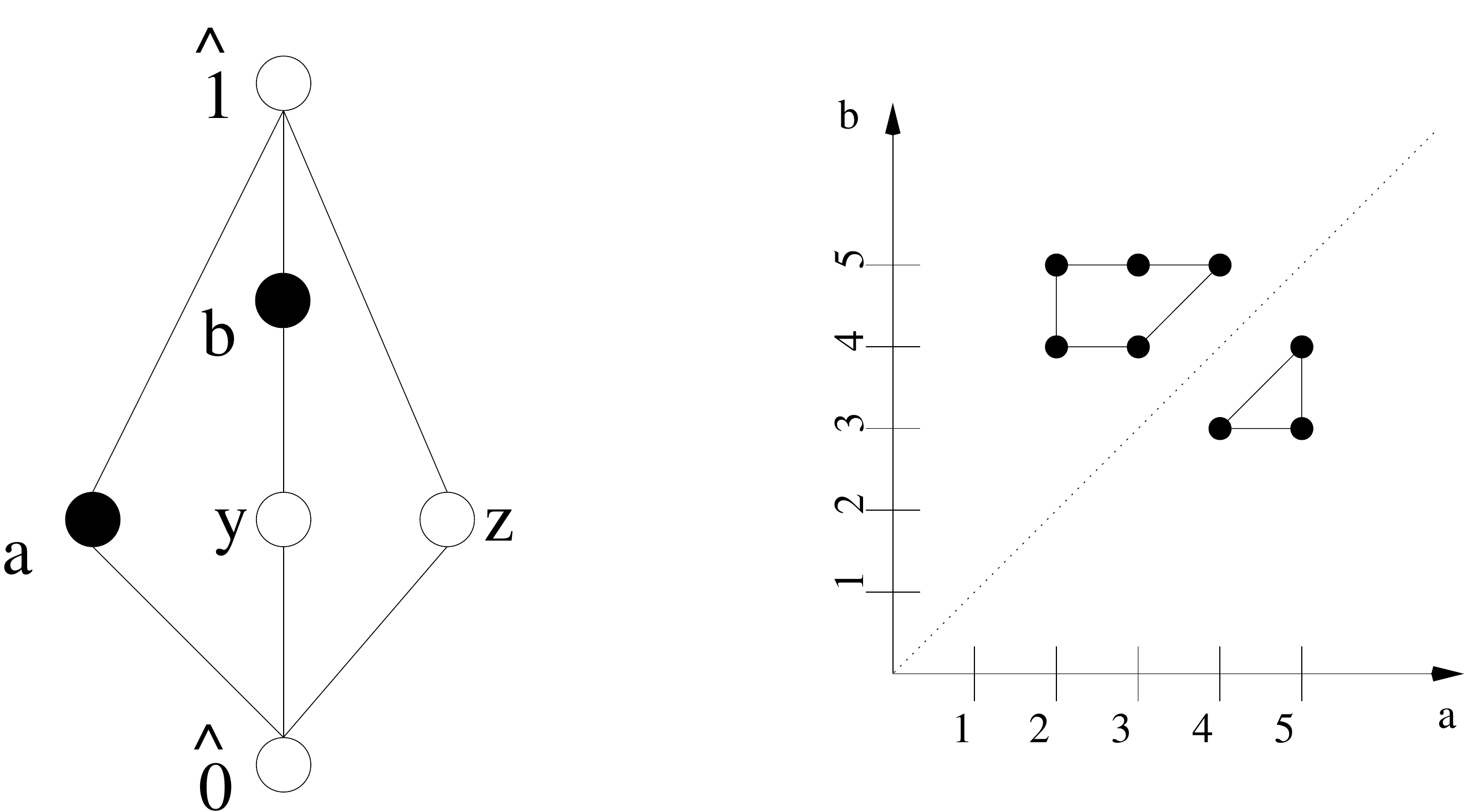}
		\caption{A poset $P$ and subposet $Q$ given by the black-colored elements. $(P,Q)$-subposet vectors are drawn on the right. }
		\label{fig:posetNex3}
	\end{center}
\end{figure}

We will start with an example in Figure~\ref{fig:posetNex3}. One can notice that the points are grouped into two parts, depending on which of $a$ or $b$ is bigger. Let us add the relation $a>b$ to $P$ and $Q$ respectively to get $P_1$ and $Q_1$. Similarly, let us add the relation $a<b$ to $P$ and $Q$ respectively to get $P_2$ and $Q_2$. Then as one can see from Figure~\ref{fig:posetNex2}, one group of points of $(P,Q)$-subposet vectors come from $(P_1,Q_1)$-subposet vectors and the other comes from $(P_2,Q_2)$-subposet vectors.

\begin{figure}[htbp]
	\begin{center}
	 		\includegraphics[width=1.0\textwidth]{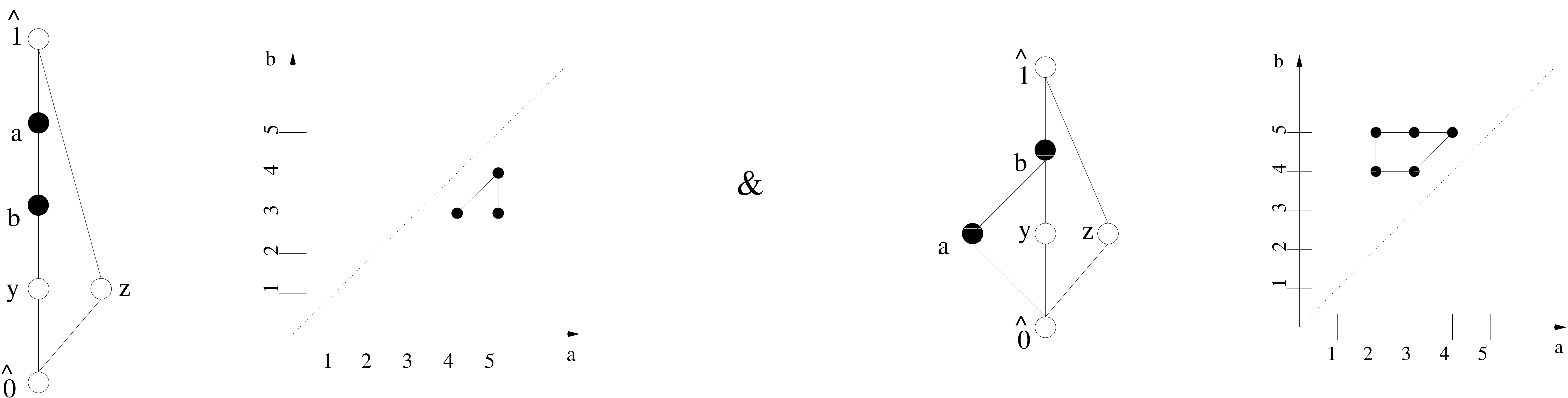}
		\caption{$P_1$ and $Q_1$ are obtained from $P$ and $Q$ by adding $a>b$. $P_2$ and $Q_2$ are obtained from $P$ and $Q$ by adding $a<b$. }
		\label{fig:posetNex2}
	\end{center}
\end{figure}

If we look at the line $x_a=x_b$ in Figure~\ref{fig:posetNex2}, the nearby faces of $N'_{P_1,Q_1}$ and $N'_{P_2,Q_2}$ look identical. More precisely, the intersection of $N'_{P_1,Q_1}$ with $x_a - x_b = 1$ and the intersection of $N'_{P_2,Q_2}$ with $x_a - x_b = -1$ looks identical. And that face looks exactly like $N'_{P_3,Q_3}$ where $P_3$ and $Q_3$ are obtained from $P$ and $Q$ by identifying $a$ and $b$, as in Figure~\ref{fig:posetNex4}.

\begin{figure}[htbp]
	\begin{center}
	 		\includegraphics[width=0.4\textwidth]{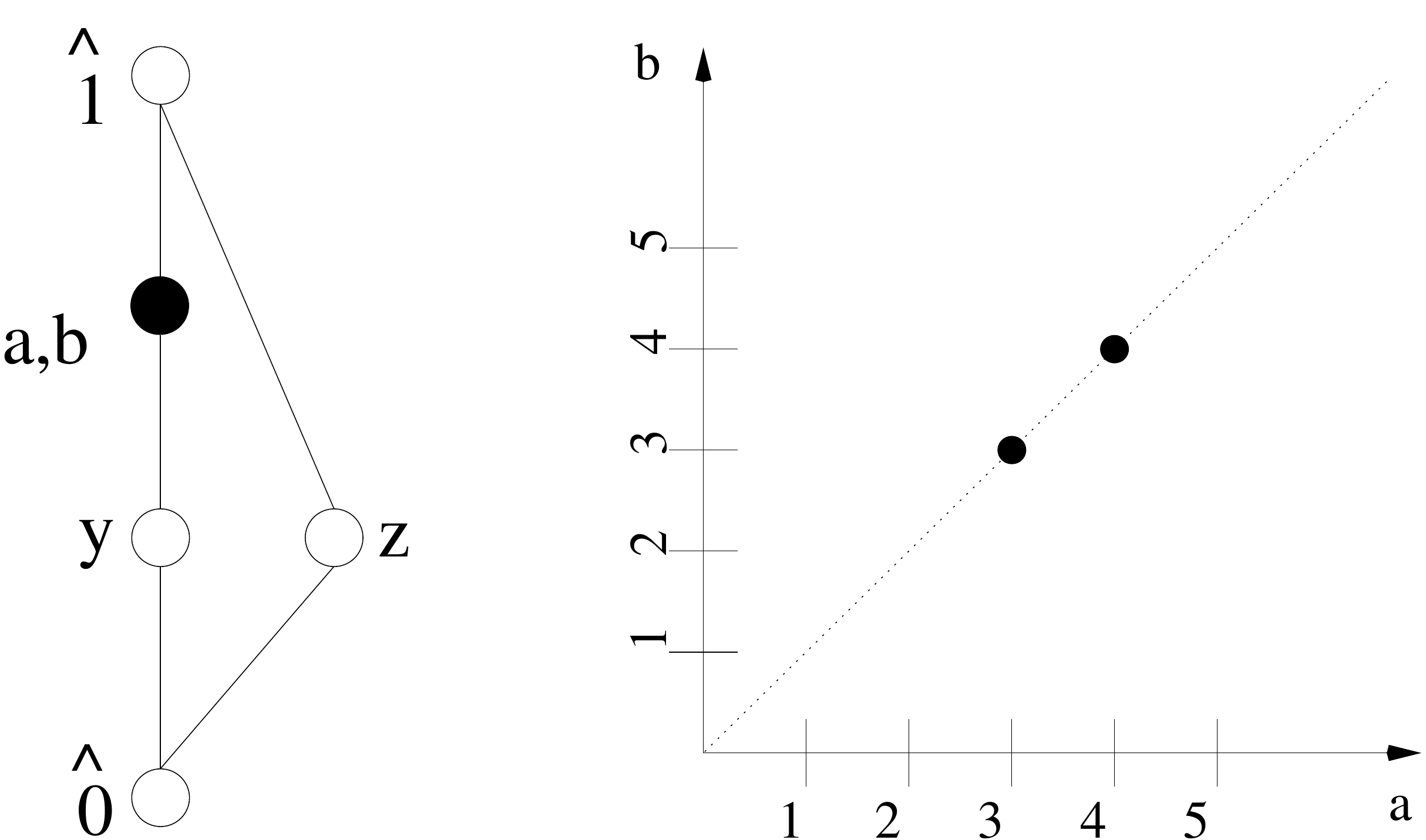}
		\caption{$P_3$ and $Q_3$ are obtained from $P$ and $Q$ by identifying $a$ and $b$. }
		\label{fig:posetNex4}
	\end{center}
\end{figure}

This suggests that we can glue together $N'_{P_1,Q_1}$ and $N'_{P_2,Q_2}$ along $N'_{P_3,Q_3}$. We translate $N'_{P_1,Q_1}$ by negating $1$ from $x_a$ and $N'_{P_2,Q_2}$ by negating $1$ from $x_b$. Then we get a polyhedra as in Figure~\ref{fig:posetNex5}, which we will call the \newword{posetohedron} of the pair $(P,Q)$.

\begin{figure}[htbp]
	\begin{center}
	 		\includegraphics[width=0.4\textwidth]{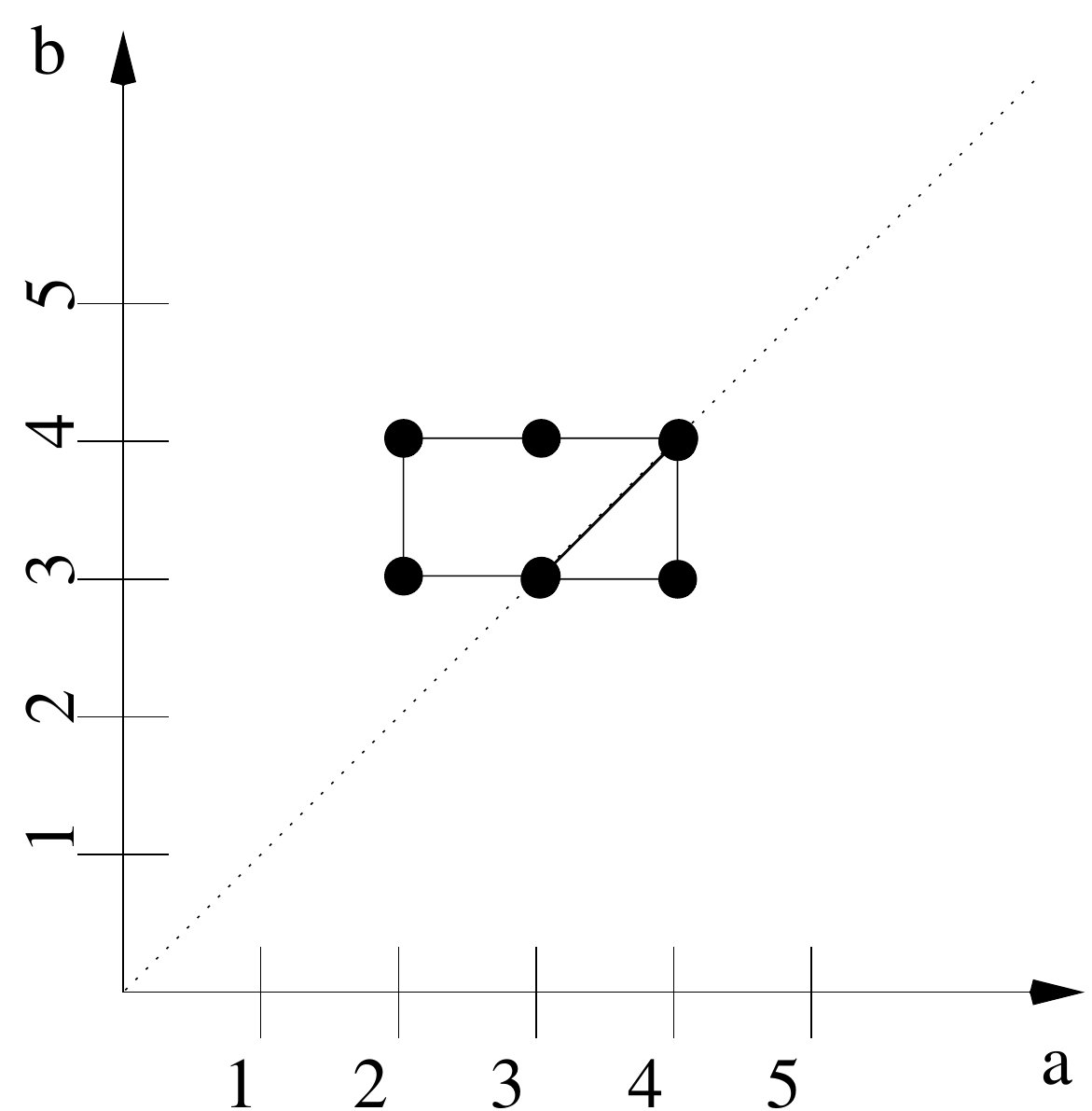}
		\caption{$N'_{P_1,Q_1}$ and $N'_{P_2,Q_2}$ glued together.}
		\label{fig:posetNex5}
	\end{center}
\end{figure}

Now we will describe the above procedure formally. Recall that we denote the elements of $Q$ by $q_1,\ldots,q_r$, by choosing some linear extension on $Q$. We are going to associate a hyperplane arrangement given by hyperplanes $x_i - x_j = 0$ for all pairs $1 \leq i \leq j \leq r$, and denote this by $\A_Q$. Each chamber in $\A_Q$ corresponds to an ordering $x_{w(1)} < \cdots < x_{w(r)}$ where $w \in S_r$. So from now on, we will identify the chambers with their corresponding permutations. We will say that $w$ is \newword{valid} if $q_{w(1)} < \cdots < q_{w(r)}$ is a valid total ordering of elements of $Q$. For $(P,Q)$-subposet vectors coming from linear extensions compatible with the ordering $q_{w(1)} < \cdots < q_{w(r)}$, we denote them $(P,Q,w)$-subposet vectors.

Then the set of $(P,Q)$-subposet vectors is just the disjoint union of $(P,Q,w)$-subposet vectors for all $w \in S_r$, since $(P,Q,w)$-subposet vectors lie in the interior of chamber $w$. If we add the relation $q_{w(1)} < \cdots < q_{w(r)}$ to $P$ and $Q$ to get $P_w$ and $Q_w$ respectively, $(P,Q,w)$-subposet vectors are exactly the integer lattice points of $N'_{P_w,Q_w}$. We will call such polytope a \newword{block}.

We want to show that if we translate each block $N'_{P_w,Q_w}$ by $- \sum_i (i-1)e_{w(i)}$, we get a polytopal complex. In other words, we want to show that under this translation, the blocks glue nicely, especially that the intersection of any collection of blocks is a common face of all such blocks.

We start with the following property:

\begin{lemma}
Let $w$ and $v$ be two different permutations. The translated blocks $N'_{P_w,Q_w} -\sum_i (i-1)e_{w(i)} $ and $N'_{P_v,Q_v} - \sum_i (i-1)e_{v(i)}$ have disjoint interiors.
\end{lemma}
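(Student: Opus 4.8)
The plan is to show that after translation each block is confined to a single closed chamber of the arrangement $\A_Q$, so that two blocks can only ever meet along the walls of $\A_Q$. For $w\in S_r$ write $\overline{C}_w=\{x\in\RR^r: x_{w(1)}\le x_{w(2)}\le\cdots\le x_{w(r)}\}$ for the closed chamber associated with $w$, and let $C_w$ denote its (topological) interior, the open chamber $\{x_{w(1)}<\cdots<x_{w(r)}\}$. Since the interior of any set contained in $\overline{C}_w$ is contained in $C_w$, and the open chambers $C_w$ and $C_v$ are disjoint whenever $w\neq v$ (a vector with pairwise distinct coordinates has a unique increasing reordering), the lemma would reduce to the single containment
\[
N'_{P_w,Q_w}-\sum_i (i-1)\,e_{w(i)}\ \subseteq\ \overline{C}_w .
\]

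To prove this containment I would first note that $N'_{P_w,Q_w}$ is a Minkowski sum of the lattice simplices $\Delta'_{[i,j]}$, so every vertex of $N'_{P_w,Q_w}$ is an integer point. By Corollary~\ref{cor:main} applied to the chain $Q_w$, each such integer point is a $(P_w,Q_w)$-subposet vector, that is, of the form $(\sigma(q_1),\ldots,\sigma(q_r))$ for some linear extension $\sigma$ of $P_w$. Because $P_w$ contains the relations $q_{w(1)}<\cdots<q_{w(r)}$, any such vector satisfies $\sigma(q_{w(1)})<\cdots<\sigma(q_{w(r)})$, a strictly increasing sequence of integers, whence $\sigma(q_{w(i+1)})\ge\sigma(q_{w(i)})+1$ for every $i$. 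After subtracting $\sum_i(i-1)e_{w(i)}$ the coordinate in position $w(i)$ becomes $\sigma(q_{w(i)})-(i-1)$, and the inequality above gives $\sigma(q_{w(i)})-(i-1)\le\sigma(q_{w(i+1)})-i$; thus the translated vertex has weakly increasing coordinates along $w(1),\ldots,w(r)$ and so lies in $\overline{C}_w$. As $\overline{C}_w$ is convex and the translated block is the convex hull of its translated vertices, the whole translated block lies in $\overline{C}_w$, which is exactly the required containment.

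The step I expect to be the main obstacle is precisely this containment, and more specifically obtaining the sharp (``thin'') chamber rather than a thickened one. A purely Minkowski-theoretic argument shows only that $N'_{P_w,Q_w}$ itself lies in $\overline{C}_w$ (each $\Delta'_{[i,j]}$ lies in the cone $\{c_1\le\cdots\le c_r\}$, which is closed under Minkowski sum), but translating by $-\sum_i(i-1)e_{w(i)}$ would then only place the block inside the thicker region $\{x_{w(i)}-x_{w(i+1)}\le 1\}$, whose analogues for different $w$ overlap and do not give disjointness. The extra unit of separation must come from integrality: the distinct integer values forced on $q_{w(1)},\ldots,q_{w(r)}$ guarantee a gap of at least one between consecutive coordinates in the $w$-order, and the translation is calibrated to absorb exactly this gap. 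Invoking Corollary~\ref{cor:main} to identify the vertices with honest subposet vectors, rather than reasoning only about the real polytope, is therefore the crucial move; the surrounding chamber bookkeeping is routine.
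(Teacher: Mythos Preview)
Your argument is correct and follows the same chamber-containment idea as the paper: show that each translated block lies in the closed chamber $\overline{C}_w$, so that the interiors, being contained in the pairwise disjoint open chambers $C_w$, cannot overlap. In fact you are more explicit than the paper, which simply asserts that $N'_{P_w,Q_w}$ lies in the open chamber and then says ``hence'' the translate lies in the closed one; your use of Corollary~\ref{cor:main} to identify the vertices as genuine subposet vectors, and hence extract the unit gap from integrality, is exactly what is needed to justify that ``hence''.
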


\begin{proof}
The block $N'_{P_w,Q_w}$ lies strictly inside the chamber $x_{w(1)} < \cdots < x_{w(n)}$. Hence $N'_{P_w,Q_w} -i \sum_i e_{w(i)} $ lies in $x_{w(1)} \leq \cdots \leq x_{w(n)}$. Therefore the interiors of $N'_{P_w,Q_w} -i \sum_i e_{w(i)} $ and $N'_{P_v,Q_v} -i \sum_i e_{v(i)}$ cannot overlap.
\end{proof}

We now check that a nonempty intersection of any collection of blocks is a common face of the blocks. We define a \newword{face} of $\A_Q$ to be a face of one of the polyhedron, obtained by taking the closure of a chamber. The faces of $\A_Q$ are in bijection with the faces of a permutohedron under duality. The intersection of some given set of translated blocks happen inside a face of $\A_Q$. Fix a face $F$ of $\A_Q$ and we will use $\C_F$ to denote the set of chambers whose closure contains $F$. A face $F$ which has dimension $d$ corresponds to an ordered partition of $[r]$ into $d$ parts according to \cite{z-lop-95}. To be more precise, $F$ of dimension $d$ corresponds to some ordered partition $\Pi = (\Pi_1,\ldots,\Pi_d)$ and this translates to a partial ordering where $a < b$ if $a \in \Pi_i, b \in \Pi_j$ for $i < j$. The chambers of $\C_F$ are chambers corresponding to the total order compatible with this partial order.

By reordering the coordinates, we may assume that $F$ corresponds to the ordered partition $([1..i_1],[i_1+1..i_1+i_2],\ldots,[i_1+\cdots+i_{d-1}+1..i_1+\cdots+i_d])$. Then each chamber of $\C_F$ correspond to a total ordering $w_1(1) < \cdots < w_1(i_1) < w_2(i_1+1) < \cdots < w_2(i_1+i_2) < \cdots < w_d(i_1+\cdots+i_d)$ where $w_k \in S_{i_k}$ for each $1 \leq k \leq d$.

We first show that the blocks glue nicely when $F$ is a facet of $\A_Q$.

\begin{lemma}
\label{lem:adjblock}
Let $w$ and $v$ be two valid chambers in $\A_Q$ where $w$ is a permutation $[\cdots,i,j,\cdots]$ and $v$ is $[\cdots,j,i,\cdots]$. We denote $H$ to be the hyperplane $x_i - x_j = 0$. Then $(N'_{P_w,Q_w} - i \sum e_{w(i)}) \cap H = (N'_{P_v,Q_v} - i \sum e_{v(i)}) \cap H$.
\end{lemma}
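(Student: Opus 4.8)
The plan is to reduce the claimed equality of two polytopes to an equality of their sets of integer points, and then to exhibit an explicit value-swapping bijection between those sets. Write $k$ for the position of $i$ in $w$, so that $i=w(k)$ and $j=w(k+1)$, while $j=v(k)$ and $i=v(k+1)$. First I would check that each side is genuinely a face of the corresponding translated block. Since every $(P,Q,w)$-subposet vector lies in the open chamber $x_{w(1)}<\cdots<x_{w(r)}$, after subtracting $\sum_\ell(\ell-1)e_{w(\ell)}$ the coordinate indexed by $i$ decreases by $k-1$ and the coordinate indexed by $j$ decreases by $k$, so $x_i-x_j$ increases by $1$; as $x_i<x_j$ on the block, the translated block lies in the halfspace $\{x_i\le x_j\}$ and $H=\{x_i=x_j\}$ supports it. Symmetrically, the translate of the $v$-block lies in $\{x_i\ge x_j\}$. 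Hence both $F_w:=(N'_{P_w,Q_w}-\sum_\ell(\ell-1)e_{w(\ell)})\cap H$ and $F_v:=(N'_{P_v,Q_v}-\sum_\ell(\ell-1)e_{v(\ell)})\cap H$ are faces of integral generalized permutohedra lying on the lattice hyperplane $H$, so they are lattice polytopes. Because a lattice polytope equals the convex hull of the integer points it contains, it suffices to prove that $F_w$ and $F_v$ contain exactly the same integer points.

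By Corollary~\ref{cor:main}, the integer points of the translated $w$-block are precisely the vectors $(\sigma(q_1),\ldots,\sigma(q_r))-\sum_\ell(\ell-1)e_{w(\ell)}$, as $\sigma$ ranges over the linear extensions of $P$ compatible with $q_{w(1)}<\cdots<q_{w(r)}$. A short computation with the translation shows that such a point lies on $H$ exactly when $\sigma(q_j)=\sigma(q_i)+1$, i.e. when $q_i,q_j$ receive consecutive values; the analogous statement holds for $v$ with $\sigma(q_i)=\sigma(q_j)+1$. So the integer points of $F_w$ are indexed by the linear extensions compatible with $w$ in which $q_i,q_j$ are consecutive, and those of $F_v$ by the linear extensions compatible with $v$ in which $q_j,q_i$ are consecutive.

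The core step is the bijection. Since $w$ and $v$ are both valid, $q_i$ and $q_j$ are incomparable in $Q$, hence incomparable in $P$. Given $\sigma$ compatible with $w$ with $\sigma(q_j)=\sigma(q_i)+1$, let $\sigma'$ be obtained by interchanging the values of $q_i$ and $q_j$. Because $q_i,q_j$ are incomparable and their values are consecutive, a routine check of the order relations shows $\sigma'$ is again a linear extension of $P$, now compatible with $v$, and $\sigma\mapsto\sigma'$ is an involution. I would then verify that the two translated subposet vectors coincide: for every coordinate $m\neq i,j$ the values $\sigma(q_m)=\sigma'(q_m)$ and the two translations agree, while for $m=i$ and $m=j$ the one extra unit subtracted by the translation (from the swapped positions $k,k+1$) exactly cancels the difference $\sigma(q_j)-\sigma(q_i)=1$, so the $w$-point and the $v$-point are literally the same point of $H$. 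Thus every integer point of $F_w$ is an integer point of $F_v$ and conversely, which gives $F_w=F_v$.

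The main obstacle is bookkeeping rather than conceptual: one must track carefully how the positions $k,k+1$, the labels $i=w(k),j=w(k+1)$, and the two translations interact so that the claimed cancellation is exact, and one must confirm cleanly that swapping the consecutive values of two $P$-incomparable elements yields a linear extension. The degenerate case, in which no compatible $\sigma$ makes $q_i,q_j$ consecutive, is automatic: then $F_w$ and $F_v$ contain no integer points and, being lattice polytopes, are both empty, hence equal.
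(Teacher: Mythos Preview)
Your argument is correct and follows essentially the same route as the paper: both proofs rest on the observation that $q_i$ and $q_j$ are incomparable in $P$, so swapping their (consecutive) values in a linear extension compatible with $w$ produces a linear extension compatible with $v$, and the two translations are designed so that the resulting subposet vectors land on the same point of $H$. The paper's proof is a two-sentence sketch of this swap; your version supplies the supporting details it omits---most notably the reduction from polytope equality to lattice-point equality (via the fact that each $F_w,F_v$ is a face of an integral generalized permutohedron, hence a lattice polytope determined by its integer points), the explicit translation bookkeeping, and the empty degenerate case.
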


\begin{proof}

Since $q_i$ and $q_j$ are incomparable in $P$, for any integer lattice point of $N'_{P_w,Q_w}$ such that the value of $j$-th coordinate is exactly one larger than the value of $i$-th coordinate, we can swap those entries and get a integer lattice point of $N'_{P_v,Q_v}$, and vice versa. So $N'_{P_w,Q_w} \cap H = N'_{P_v,Q_v}$.

\end{proof}

Our goal now is to use this result to show that all blocks glue nicely.

\begin{lemma}
\label{lem:halfspace}
Let $F$ be a face of $\A_Q$. If $w$ and $v$ are two valid chambers in $\C_F$, then one can go from $w$ to $v$ by sequence of adjacent transpositions, while staying inside $\C_F$.
\end{lemma}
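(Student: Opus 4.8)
The plan is to reduce the statement to the classical fact that the linear extensions of a single finite poset are connected under adjacent transpositions of incomparable elements, and then to apply that fact one block of $F$ at a time. First I would invoke the normalization set up just before the lemma, so that $F$ is the ordered partition of $[r]$ into the consecutive blocks $\Pi_1=[1,i_1]$, $\Pi_2=[i_1+1,i_1+i_2],\ldots,\Pi_d$. Then $\C_F$ is precisely the set of total orders that place every index of $\Pi_k$ before every index of $\Pi_{k+1}$, with an arbitrary order inside each block, so that $\C_F \cong S_{i_1}\times\cdots\times S_{i_d}$. An adjacent transposition keeps us inside $\C_F$ exactly when it swaps two positions lying in the same block, since swapping across a block boundary violates the fixed order of the blocks; hence the moves permitted by the lemma are exactly the within-block adjacent transpositions.

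Next I would show that validity decouples over the blocks: I claim a chamber $u \in \C_F$ is valid if and only if, for each $k$, the order that $u$ induces on $\Pi_k$ is a linear extension of the induced subposet $Q|_{\Pi_k}$. The cross-block relations of $Q$ impose no further condition. Indeed, if $a\in\Pi_k$ and $b\in\Pi_l$ with $k<l$, then every chamber of $\C_F$ lists $a$ before $b$, so were $q_b < q_a$ in $Q$ no chamber of $\C_F$ could be valid, contradicting that $w$ is a valid chamber of $\C_F$. Thus every chamber of $\C_F$ respects all cross-block relations of $Q$ automatically, and the valid chambers of $\C_F$ are exactly the product $\prod_{k=1}^{d}\mathrm{LinExt}(Q|_{\Pi_k})$.

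The engine is the standard fact, which I would prove by induction on the size of the poset: any two linear extensions $L,L'$ of a finite poset $R$ can be joined by adjacent transpositions of consecutive incomparable elements, every intermediate listing being again a linear extension. Letting $m$ be the first element of $L'$, which is minimal in $R$, every element preceding $m$ in $L$ is incomparable to $m$ (it cannot lie below $m$ since $m$ is minimal, nor above $m$ since then $m$ would have to precede it), so $m$ may be bubbled to the front of $L$ by such swaps; deleting $m$ and inducting finishes the argument. Applying this inside each block—first matching the $\Pi_1$-order of $w$ to that of $v$, then $\Pi_2$, and so on—produces a path from $w$ to $v$. Since the blocks occupy consecutive positions, each move is a genuine adjacent transposition of the global permutation swapping elements incomparable in $Q$, it never crosses a block boundary, so every intermediate chamber stays inside $\C_F$ and in fact stays valid; the latter is exactly what lets one chain Lemma~\ref{lem:adjblock} in the subsequent gluing argument.

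The hard part will be the decoupling step: one must verify that remaining in $\C_F$ forces the cross-block relations of $Q$ to be satisfied automatically, so that validity genuinely factors over the blocks and no transposition across a block boundary is ever required. Once that is established, the connectivity reduces to the Cartesian product of the per-block linear-extension graphs, each of which is connected by the classical fact above.
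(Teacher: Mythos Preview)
Your proof is correct but takes a different route from the paper. The paper argues geometrically: the chambers of $\C_F$ are exactly the chambers of $\A_Q$ lying inside the open cone cut out by the cross-block inequalities $x_a<x_b$ (for $a$ in an earlier block of the ordered partition than $b$); this cone is a single chamber of the subarrangement of $\A_Q$ generated by the cross-block hyperplanes, and the chambers of the full arrangement lying in any such region are connected by wall-crossings, each of which is an adjacent transposition within a block. You instead identify $\C_F$ with $S_{i_1}\times\cdots\times S_{i_d}$ and reduce to the classical connectivity of linear extensions of each $Q|_{\Pi_k}$ under adjacent swaps of incomparable elements. The payoff of your route is that the path you build stays among \emph{valid} chambers throughout, not merely inside $\C_F$; the paper's statement and proof only guarantee the latter, yet chaining Lemma~\ref{lem:adjblock} along the path in the proof of Proposition~\ref{prop:complex} requires every intermediate chamber $u$ to be valid (otherwise $P_u$, $Q_u$ are not posets and $N'_{P_u,Q_u}$ is undefined). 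So your argument actually strengthens the lemma in precisely the direction the subsequent gluing needs.
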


\begin{proof}
Denote the ordered partition corresponding to $F$ to be $([1..i_1],[i_1+1..i_1+i_2],\ldots,[i_1+\cdots+i_{d-1}+1..i_1+\cdots+i_d])$. Then $F$ is given by intersecting the halfspaces $x_a < x_b$ for all pairs $a \in [\sum_1^c i_m+1 ,\sum_1^{c+1} i_m], b \in [\sum_1^d i_m+1 ,\sum_1^{d+1} i_m]$ where $c<d$ and hyperplanes $x_a = x_b$ for all possible pairs $a,b \in [\sum_1^c i_m+1, \sum_1^{c+1} i_m]$. Then chambers of $\C_F$ are exactly the chambers of $\A_Q$ inside the intersection of halfspaces $x_a < x_b$ for all pairs $a \in [\sum_1^c i_m+1 ,\sum_1^{c+1} i_m], b \in [\sum_1^d i_m+1 ,\sum_1^{d+1} i_m]$ where $c<d$. This space corresponds to a chamber of some subarrangement of $\A_Q$. Hence we can go from one chamber of $\C_F$ to another chamber of $\C_F$ by moving across the facets, and by only using the chambers of $\C_F$.
\end{proof}

Now we are ready to prove the following result:

\begin{proposition}
\label{prop:complex}
The collection of translated blocks $N'_{P_w,Q_w} - \sum_i (i-1)e_{w(i)}$'s form a polytopal complex.
\end{proposition}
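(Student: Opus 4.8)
The plan is to verify the two defining conditions of a polytopal complex for the family $\tilde N_w := N'_{P_w,Q_w} - \sum_i (i-1)e_{w(i)}$ (over valid $w$): closure under taking faces, and that the intersection of any two members is a common face of both. Closure under faces is obtained for free by declaring the complex to consist of all faces of all blocks; by the standard reduction, once I know that $\tilde N_w \cap \tilde N_v$ is a common face of $\tilde N_w$ and $\tilde N_v$ for every pair, the intersection of any two faces of any two blocks is automatically a common face of those two faces. So the entire content is the pairwise intersection claim.

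First I would locate where the intersection lives. By the disjoint-interiors lemma each $\tilde N_w$ sits in the closed chamber $\bar C_w = \{x_{w(1)} \le \cdots \le x_{w(r)}\}$, so $\tilde N_w \cap \tilde N_v \subseteq \bar C_w \cap \bar C_v = \bar F$, the closed face of $\A_Q$ attached to some ordered partition $\Pi = (\Pi_1,\ldots,\Pi_d)$, with both $w$ and $v$ in $\C_F$. Writing $L_F$ for the affine span of $\bar F$, i.e.\ the subspace cut out by the equations $x_a = x_b$ for $a,b$ in a common part, a short computation shows $\bar C_w \cap L_F = \bar F$ for every $w \in \C_F$, hence $\tilde N_w \cap \bar F = \tilde N_w \cap L_F$. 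Moreover, for $w \in \C_F$ the equations defining $L_F$ are generated by the within-part, consecutive-in-$w$ equalities $x_{w(k)} = x_{w(k+1)}$, each of which is a supporting hyperplane of $\tilde N_w$ because $\tilde N_w \subseteq \{x_{w(k)} \le x_{w(k+1)}\}$ (this is exactly where the translation is essential); therefore $\tilde N_w \cap L_F$ is a genuine face of $\tilde N_w$.

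It then remains to show that this face is independent of the chamber, i.e.\ $\tilde N_w \cap L_F = \tilde N_v \cap L_F$ for all valid $w,v \in \C_F$; granting this, $\tilde N_w \cap \tilde N_v = (\tilde N_w \cap \bar F)\cap(\tilde N_v \cap \bar F) = \tilde N_w \cap L_F$, a common face of both, completing the proof. To prove chamber-independence I would connect $w$ to $v$ by a path of valid chambers in $\C_F$, each step an adjacent transposition, and propagate equality one step at a time. At a single step swapping adjacent positions $i,j$, Lemma~\ref{lem:adjblock} gives $\tilde N_{w'} \cap H = \tilde N_{w''} \cap H$ for $H = \{x_i = x_j\}$; since the swap takes place inside one part of $\Pi$ we have $L_F \subseteq H$, so intersecting once more with $L_F$ yields $\tilde N_{w'} \cap L_F = \tilde N_{w''} \cap L_F$, and the equalities chain along the whole path.

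The main obstacle is producing the connecting path through \emph{valid} chambers only: Lemma~\ref{lem:halfspace} connects $w$ and $v$ within $\C_F$, but a naive path might cross an invalid chamber, whose block is empty and to which Lemma~\ref{lem:adjblock} (which needs both chambers valid, equivalently the swapped elements incomparable) does not apply. I would resolve this by observing that a valid chamber of $\C_F$ is precisely a choice, independently in each part $\Pi_c$, of a linear extension of the restricted poset $P|_{\Pi_c}$, together with the classical fact that the linear extensions of any poset are connected by adjacent transpositions of incomparable neighbours. Applying this within each part produces a valid path, at which point every step is legitimate and the argument closes.
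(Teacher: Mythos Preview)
Your proof is correct and follows essentially the same architecture as the paper's: locate the intersection of two translated blocks inside a face $F$ of the arrangement $\A_Q$, show that intersecting a block with $F$ (equivalently with its affine span $L_F$) yields a genuine face of that block, and then show this face is the same for every valid chamber in $\C_F$ by chaining Lemma~\ref{lem:adjblock} along a path of adjacent chambers furnished by Lemma~\ref{lem:halfspace}.

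Where you go beyond the paper is in your treatment of validity along the connecting path. The paper simply invokes Lemma~\ref{lem:halfspace}, which produces a path through chambers of $\C_F$ but says nothing about whether the intermediate chambers are \emph{valid}; yet Lemma~\ref{lem:adjblock} is stated for two valid chambers and its proof genuinely uses that the swapped elements $q_i,q_j$ are incomparable. You correctly isolate this as the one nontrivial point and resolve it cleanly: since every chamber of $\C_F$ orders the parts $\Pi_1,\ldots,\Pi_d$ the same way, validity in $\C_F$ amounts to choosing, independently in each part $\Pi_c$, a linear extension of the induced poset $Q|_{\Pi_c}$ (your $P|_{\Pi_c}$ is the same thing, as $\Pi_c\subseteq Q$ and $Q$ is induced); the classical fact that the linear extensions of a finite poset are connected by adjacent transpositions of incomparable neighbours then supplies a path through valid chambers only. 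This is a genuine tightening of the paper's argument rather than a different approach.
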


\begin{proof}
Let $F$ be a face of $\A_Q$. By combining Lemma~\ref{lem:adjblock} and Lemma~\ref{lem:halfspace}, we have that: if $w$ and $v$ are two valid chambers in $\C_F$, then $(N'_{P_w,Q_w} - i \sum_i e_{w(i)}) \cap F = (N'_{P_v,Q_v} - i \sum_i e_{v(i)}) \cap F$. 

Now let us show that for any valid $w$ in some $\C_F$, the intersection $(N'_{P_w,Q_w} - i \sum_i e_{w(i)}) \cap F$ is a face of $N'_{P_w,Q_w} - i \sum_i e_{w(i)}$. As we move from chamber $w$ to $F$, the extra condition we are imposing are bunch of hyperplanes of form $x_a - x_b = 0$. But since these are translates of some hyperplanes that define the polytope $N'_{P_w,Q_w}$, the intersection $(N'_{P_w,Q_w} - i \sum_i e_{w(i)}) \cap F$ can be thought of as changing some inequalities defining $N'_{P_w,Q_w} - i \sum_i e_{w(i)}$ to equalities. Hence $(N'_{P_w,Q_w} - i \sum_i e_{w(i)}) \cap F$ is a face of $N'_{P_w,Q_w} - i \sum_i e_{w(i)}$. This implies that intersection of any set of blocks is a common face of the blocks.
\end{proof}

We will call the support of the polytopal complex the $(P,Q)$-posetohedron. When $Q$ is not a chain, then $(P,Q)$-posetohedron is not convex in general, as one can see from Figure~\ref{fig:posetNex6}.

\begin{figure}[htbp]
	\begin{center}
	 		\includegraphics[width=0.9\textwidth]{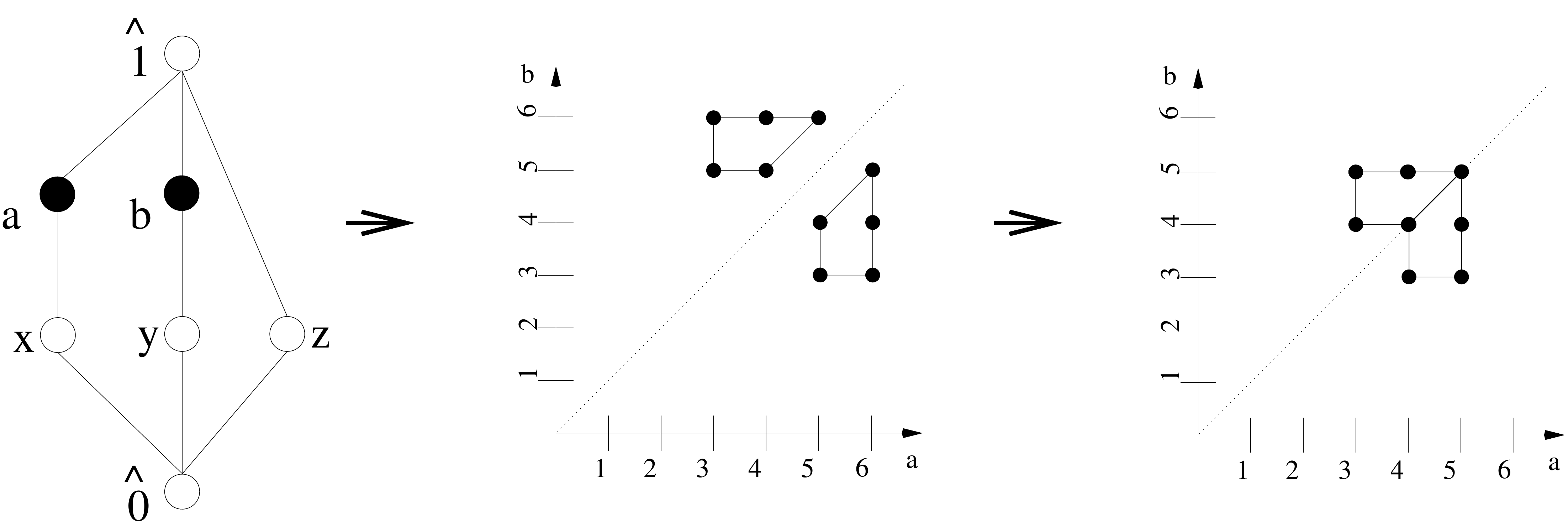}
		\caption{Example of a posetohedron that is not convex.}
		\label{fig:posetNex6}
	\end{center}
\end{figure}

\begin{proposition}
\label{prop:contractible}
The $(P,Q)$-posetohedron is contractible.
\end{proposition}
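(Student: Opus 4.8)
The plan is to prove contractibility by induction, decomposing the posetohedron along a single incomparable pair of $Q$ and then invoking a standard gluing lemma: if a space is the union of two closed contractible subspaces whose intersection is contractible (and the inclusions are cofibrations, as always holds for subcomplexes of a polytopal complex), then the union is contractible. All three inputs to this lemma will be posetohedra of strictly simpler pairs, so the induction closes.

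For the decomposition, first observe that if $Q$ is a chain there is a single valid block, which is convex and hence contractible; this will be the base case (together with $|Q|=1$). Otherwise I would pick two elements $q_a \parallel q_b$ that are incomparable in $Q$. Let $P^{+},Q^{+}$ (resp. $P^{-},Q^{-}$) be obtained by adding the relation $q_a<q_b$ (resp. $q_b<q_a$), and let $P^{0},Q^{0}$ be obtained by identifying $q_a$ and $q_b$ into a single element; each is again a genuine poset, since incomparability of $q_a,q_b$ forbids any element lying strictly between them and so no cycle is created. A valid chamber $w$ places $q_a$ before $q_b$ exactly when $w$ is a linear extension of $Q^{+}$, and for such $w$ one has $P_w=(P^{+})_w$, so the translated block $N'_{P_w,Q_w}-\sum_i(i-1)e_{w(i)}$ is literally a block of the $(P^{+},Q^{+})$-posetohedron. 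Grouping the valid chambers according to the relative order of $q_a,q_b$ therefore gives $(P,Q)\text{-posetohedron}=\mathcal{X}^{+}\cup\mathcal{X}^{-}$, where $\mathcal{X}^{\pm}$ denotes the $(P^{\pm},Q^{\pm})$-posetohedron.

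Next I would identify the overlap. Since each translated block lies in its closed chamber $\overline{C_w}$, the region $\mathcal{X}^{+}$ lies in $\{x_a\le x_b\}$ and $\mathcal{X}^{-}$ in $\{x_b\le x_a\}$, so $\mathcal{X}^{+}\cap\mathcal{X}^{-}$ is contained in the wall $H=\{x_a=x_b\}$. Using Lemma~\ref{lem:adjblock} together with the face description in Proposition~\ref{prop:complex}, every valid chamber $w^{0}$ of $Q^{0}$ lifts to a valid chamber of $Q^{+}$ (replace the merged element by the consecutive pair $q_a,q_b$) and to one of $Q^{-}$ (use $q_b,q_a$), and the corresponding blocks meet $H$ in the same face, namely the $w^{0}$-block of the $(P^{0},Q^{0})$-posetohedron; blocks in which $q_a,q_b$ are not adjacent meet $H$ only in lower faces of this same complex. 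Hence $\mathcal{X}^{+}\cap H=\mathcal{X}^{-}\cap H=\mathcal{X}^{0}$, so $\mathcal{X}^{+}\cap\mathcal{X}^{-}=\mathcal{X}^{0}$, the $(P^{0},Q^{0})$-posetohedron embedded in $H$.

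Finally I would run a double induction, on $|Q|$ and, for fixed $|Q|$, on the number of incomparable pairs of $Q$. Passing from $Q$ to $Q^{\pm}$ keeps $|Q|$ fixed but strictly decreases the number of incomparable pairs, so $\mathcal{X}^{\pm}$ are contractible by the inductive hypothesis; passing to $Q^{0}$ strictly decreases $|Q|$, so $\mathcal{X}^{0}$ is contractible as well. The gluing lemma then yields contractibility of the $(P,Q)$-posetohedron. The step I expect to be the main obstacle is the middle one: checking that, after the prescribed translations, the two halves meet in \emph{exactly} the full merged posetohedron $\mathcal{X}^{0}$ and along a common subcomplex, so that Lemma~\ref{lem:adjblock} and Proposition~\ref{prop:complex} really do glue the blocks face-to-face; once this identification is secured, the topological gluing is routine.
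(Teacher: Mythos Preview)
Your proposal is correct and follows essentially the same route as the paper: induct on the complexity of $Q$, pick an incomparable pair, split the posetohedron into the two halves $(P^{\pm},Q^{\pm})$ according to the relative order of the pair, identify their intersection along the wall with the $(P^{0},Q^{0})$-posetohedron, and conclude by a gluing argument. The only minor difference is that you run a double induction on $|Q|$ and the number of incomparable pairs, whereas the paper inducts on the number of incomparable pairs alone; your version is slightly more explicit about why the merged piece $(P^{0},Q^{0})$ falls under the inductive hypothesis.
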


\begin{proof}

We prove this by induction on the number of independent pairs that $Q$ contains. When $Q$ does not contain any independent pair, that is a chain, then the $(P,Q)$-posetohedron is just a generalized associahedron and is contractible. When $Q$ does contain some independent pair $(i,j)$, we get three pairs of posets by:
\begin{enumerate}
\item the pair $(P_1,Q_1)$ by adding the relation $i<j$ to $P$ and $Q$ respectively,
\item the pair $(P_2,Q_2)$ by adding the relation $i>j$ to $P$ and $Q$ respectively,
\item the pair $(P_3,Q_3)$ by identifying $i$ and $j$ to be a single element $k$, and replacing $i$ and $j$ with $k$ in all relations of $P$ and $Q$.
\end{enumerate}

By induction hypothesis, $(P_1,Q_1)$-posetohedron, $(P_2,Q_2)$-posetohedron and $(P_3,Q_3)$-posetohedron are contractible. The $(P,Q)$-posetohedron is obtained by gluing $(P_1,Q_1)$-posetohedron and $(P_2,Q_2)$-posetohedron where their intersection is combinatorially equivalent to the $(P_3,Q_3)$-posetohedron. Since they are all contractible, $(P,Q)$-posetohedorn is also contractible.
\end{proof}

Hence we get a non-convex polytope from $(P,Q)$-subposet vectors, and this polytope turns out to be contractible.

\begin{problem}
Is there some interesting topological property of a $(P,Q)$-posetohedron that depends on the combinatorics of $P$ and $Q$?
\end{problem}

\section{Describing the vertices of a posetohedron}

In this section we use the machinery from \cite{Postnikov01012009}
to give a description for the vertices of $N_{P,Q}$. Since the general case is obtained by gluing the posetohedra when $Q$ is a chain, we will restrict ourselves to when $Q = \{ \hat{0}=q_{0}<q_{1}<...<q_{r+1}=\hat{1} \}$ is a chain.

Recall that a generalized permutohedron can be expressed by $\sum_{1 \leq i,j \leq r+1} c_{i,j}\Delta_{[i,j]}$, where $c_{i,j}$ are nonnegative integers. In case when $c_{i,j}>0$ for all $i$ and $j$, the vertices of of the polytope are in bijection with plane binary trees on $[r+1]$ with the \newword{binary search labeling} \cite{Knuth:1998:ACP:280635}. Binary search labeling is the unique labeling of the tree nodes such that the label of any node is greater than that of any left descendant, and
less than that of any right descendant. Let $T$ be such a binary tree, and identify any of its nodes with its labeling. Extending Corollary 8.2 of \cite{Postnikov01012009}, we get:
\begin{lemma}
\label{lem:vertices} The vertex $v_{T}=(t_{1},...,t_{r+1})$ of a generalized permutohedron $\sum_{1 \leq i,j \leq r+1} c_{i,j}\Delta_{[i,j]}$ is given by

$$t_{k}=\sum_{l_{k}\leq i\leq k\leq j\leq r_{k}}c_{i,j},$$
where $l_{k},r_{k}$ are such that the interval $[l_{k},r_{k}]$ is
exactly the set of descendants of $k$ in $T$.
\end{lemma}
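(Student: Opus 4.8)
The plan is to combine the standard description of vertices of a Minkowski sum with the combinatorial structure already recorded in Corollary 8.2 of \cite{Postnikov01012009}. Recall that for a generic linear functional $\ell=(\ell_1,\ldots,\ell_{r+1})$, the vertex of the Minkowski sum $\sum_{i\le j}c_{i,j}\Delta_{[i,j]}$ maximizing $\ell$ is the sum of the maximizing vertices of the summands, and the maximizer of $\ell$ on $c_{i,j}\Delta_{[i,j]}$ is $c_{i,j}e_{m}$ with $m=\arg\max_{k\in[i,j]}\ell_k$ whenever this argmax is unique. Hence the $k$-th coordinate of the resulting vertex is $t_k=\sum_{(i,j):\,\arg\max_{[i,j]}\ell=k}c_{i,j}$. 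Moreover, scaling a summand by a positive constant leaves its normal fan unchanged, so (using $c_{i,j}>0$ for all $i\le j$) the polytope $\sum_{i\le j}c_{i,j}\Delta_{[i,j]}$ and the associahedron $\sum_{i\le j}\Delta_{[i,j]}$ share the same normal fan; their vertices are therefore indexed by the same objects, the binary trees of Corollary 8.2. First I would set up exactly this correspondence, so that it remains only to evaluate the coordinate formula on each normal cone.

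Next I would fix a binary tree $T$ with the binary search labeling and choose the functional $\ell_k=-\mathrm{depth}_T(k)$ (the root having depth $0$), then show that for every interval $[i,j]$ the unique maximizer of $\ell$ over $[i,j]$ is the least common ancestor $k=\mathrm{LCA}_T(i,j)$. The argument is short: since the descendants of any node carry a contiguous block of labels, $i$ and $j$ being descendants of their $\mathrm{LCA}$ forces every label in $[i,j]$ to be a descendant of $k$, so every such node has depth $\ge\mathrm{depth}_T(k)$; and any node in $[i,j]$ other than $k$ is a strict descendant of $k$, hence has strictly larger depth. Thus the maximizer is unique and equals $k$, which simultaneously shows $\ell$ lies in the normal cone of a single vertex $v_T$ and identifies that vertex with the tree $T$.

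Finally I would translate the condition $\arg\max_{[i,j]}\ell=k$ into the index set in the statement. Writing $[l_k,r_k]$ for the interval of descendants of $k$, the claim is that $k=\mathrm{LCA}_T(i,j)$ is equivalent to $l_k\le i\le k\le j\le r_k$: if $k$ is the $\mathrm{LCA}$ then $i$ lies in the left subtree of $k$ or equals $k$ and $j$ lies in the right subtree of $k$ or equals $k$, giving $i\le k\le j$ and $[i,j]\subseteq[l_k,r_k]$; conversely $l_k\le i\le k\le j\le r_k$ places $i$ and $j$ among the descendants of $k$ on opposite sides, so their $\mathrm{LCA}$ is $k$. Substituting this into $t_k=\sum_{(i,j):\,\mathrm{LCA}(i,j)=k}c_{i,j}$ yields the asserted formula. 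I expect the main obstacle to be the bookkeeping around the bijection rather than the formula itself: one must be sure the depth functional genuinely lands in the interior of the normal cone attached to $T$, so that distinct trees give distinct vertices and every vertex is captured, and this is precisely where the uniqueness-of-argmax step and the appeal to the unchanged normal fan do the real work.
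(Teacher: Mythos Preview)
Your argument is correct. The paper does not actually prove this lemma: it states the result as an extension of Corollary~8.2 of \cite{Postnikov01012009} and moves on without further justification. Your proposal supplies precisely the details one would want, via the standard Minkowski-sum vertex description together with the explicit linear functional $\ell_k=-\mathrm{depth}_T(k)$ selecting the vertex $v_T$. The identification of $\arg\max_{[i,j]}\ell$ with $\mathrm{LCA}_T(i,j)$ and the equivalence $\mathrm{LCA}_T(i,j)=k \iff l_k\le i\le k\le j\le r_k$ are both correct and cleanly argued. The one point worth making fully explicit (you gesture at it in your last paragraph) is that distinct trees yield distinct LCA-patterns on intervals---for instance $\mathrm{LCA}_T(1,r+1)$ recovers the root of $T$, and one then recurses on the two subtrees---so the map $T\mapsto v_T$ is injective; combined with the normal-fan equality this gives the bijection with the vertex set and finishes the proof.
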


\begin{figure}[htbp]
	\begin{center}
	 		\includegraphics[width=0.9\textwidth]{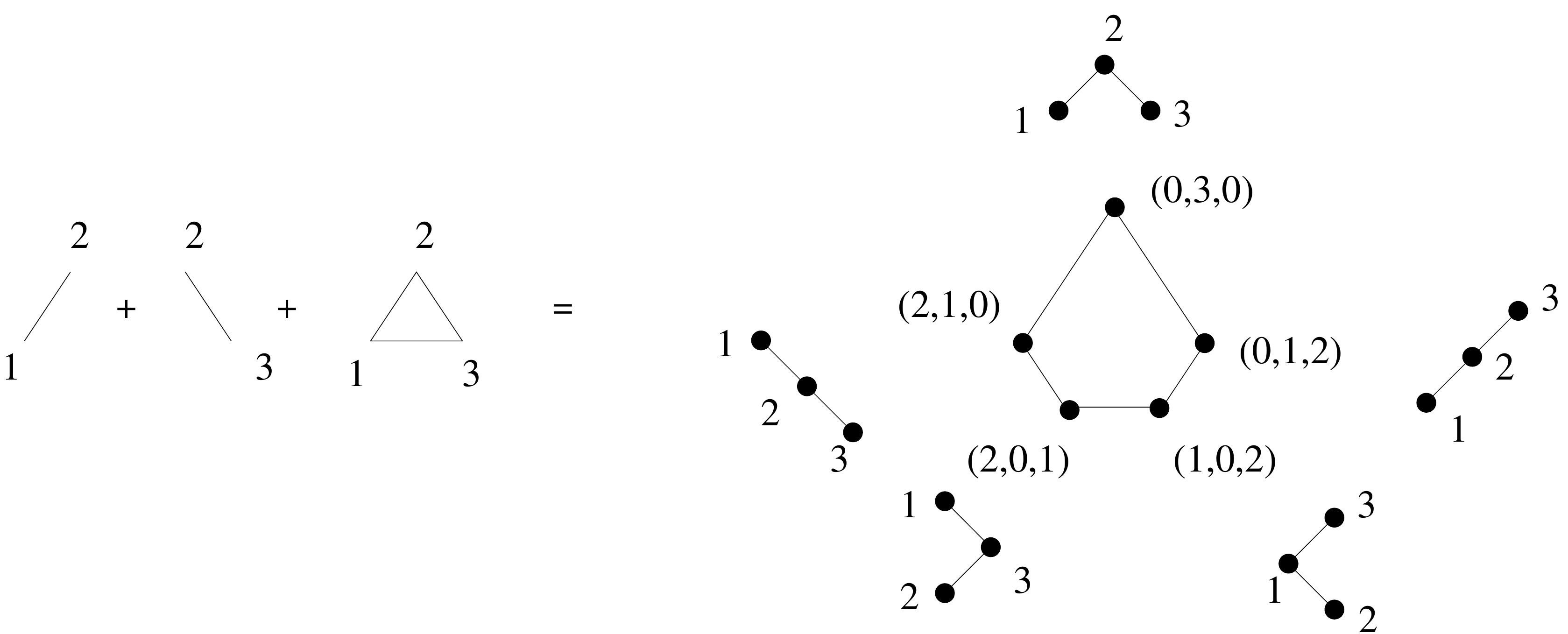}
		\caption{Using binary search labeling to describe the vertices of $\Ass_3$.}
		\label{fig:ass3ver}
	\end{center}
\end{figure}

There is a well-known bijection between plane binary trees on $[r+1]$
and subdivisions of the shifted triangular $(r+1)$-by-$(r+1)$
shape $D_{r+1}$ into rectangles, each touching a diagonal box. The
nice feature about this bijection is that if we denote by $R_{k}$
the rectangle containing the $k$th diagonal box, then 

$$l_{k}\leq i\leq k\leq j\leq r_{k}\ \ \Longleftrightarrow(i,j)\in R_{k}.$$

Figure~\ref{fig:bijection} shows an example of this bijection.

\begin{figure}[htbp]
	\begin{center}
	 		\includegraphics[width=0.4\textwidth]{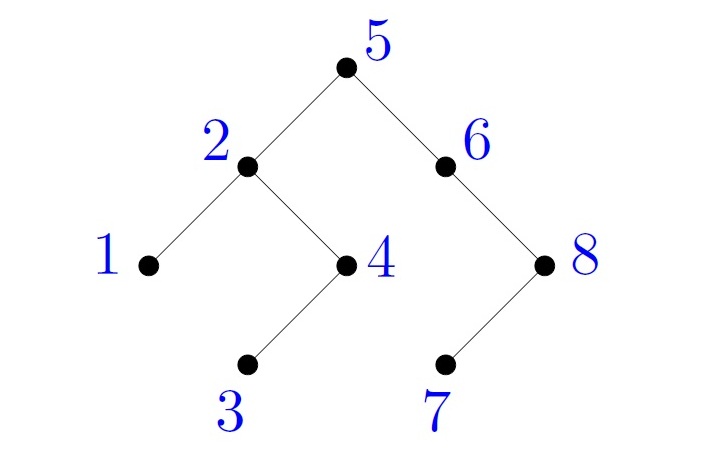}
	 		\includegraphics[width=0.4\textwidth]{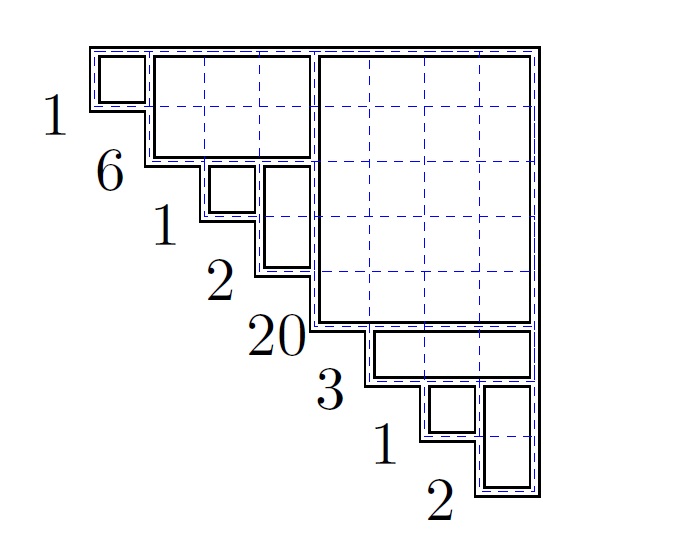}
		\caption{A binary search labeling and a corresponding subdivision.}
		\label{fig:bijection}
	\end{center}
\end{figure}


Writing the numbers $|B_{i,j}|$ in the boxes of the triangular shape $D_{r+1}$, we obtain a nice way to visualize the result of \ref{lem:vertices}.

%

%

\begin{corollary}
Consider a subdivision $\Xi$ of $D_{r+1}$ into rectangles $R_{1},...,R_{r+1}$
with $(i,i)\in R_{i}$. Then a vertex $v_{\Xi}=(t_{1},...,t_{r+1})$
of $N_{P,Q}$ is given by

\[
t_{k}=\sum_{(i,j)\in R_{k}}|B_{i,j}|\]

The map $\Xi\mapsto v_{\Xi}$ is always surjective, and it is a bijection
if and only if $|B_{i,j}|>0$ for all $i<j$. 
\end{corollary}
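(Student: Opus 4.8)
The plan is to treat the three assertions separately: that each $v_{\Xi}$ is a vertex, that $\Xi\mapsto v_{\Xi}$ is surjective, and that it is injective exactly when all off-diagonal $|B_{i,j}|$ are positive. The first two reduce to the Minkowski-sum structure together with Lemma~\ref{lem:vertices}, while the equivalence is handled by an explicit tree rotation.

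First I would record the formula and surjectivity. The displayed expression for $t_k$ is immediate from Lemma~\ref{lem:vertices} together with the stated equivalence $l_k\le i\le k\le j\le r_k \Longleftrightarrow (i,j)\in R_k$, once we set $c_{i,j}=|B_{i,j}|$ and pass from a tree $T$ to its subdivision $\Xi$. To see that each $v_{\Xi}$ really is a vertex and that all vertices arise this way, even when some $|B_{i,j}|$ vanish, I would argue through linear functionals: any vertex $v$ of $N_{P,Q}$ is the unique maximizer of some functional $\ell$ with pairwise distinct coordinates (its open normal cone meets the generic set), and since $N_{P,Q}=\sum_{i\le j}|B_{i,j}|\Delta_{[i,j]}$ is a Minkowski sum, $v=\sum_{i\le j}|B_{i,j}|\,e_{m(i,j)}$ with $m(i,j)=\arg\max_{k\in[i,j]}\ell_k$. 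The assignment $[i,j]\mapsto m(i,j)$ is exactly the binary-search-tree rule for the order induced by $\ell$, so $m(i,j)=k$ iff $l_k\le i\le k\le j\le r_k$, whence $v=v_{\Xi}$ for the corresponding subdivision. Conversely each $\Xi$ is realized by such an $\ell$, so each $v_{\Xi}$ is a vertex; this gives surjectivity for every choice of the $|B_{i,j}|$.

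For the ``if'' direction I would invoke the bijection recorded before Lemma~\ref{lem:vertices} (extending Corollary 8.2 of \cite{Postnikov01012009}): when every $c_{i,j}>0$, the plane binary trees on $[r+1]$ are in bijection with the vertices of $\sum c_{i,j}\Delta_{[i,j]}$ via $T\mapsto v_T$. Composing with the bijection between binary trees and subdivisions $\Xi$ shows $\Xi\mapsto v_{\Xi}$ is injective, hence a bijection. Note that the diagonal coefficients are automatically positive, since each $B_{i,i}$ contains $q_i$, so only the off-diagonal condition is genuinely at issue.

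The main work, and the expected obstacle, is the ``only if'' direction. Suppose $|B_{i_0,j_0}|=0$ for some $i_0<j_0$; I would produce two distinct subdivisions with the same vertex via one rotation. Build a binary search tree $T$ on $[r+1]$ in which the elements $[i_0,j_0]$ form a subtree rooted at $j_0$ whose left subtree $[i_0,j_0-1]$ is rooted at $i_0$ (so $i_0$ is the left child of $j_0$, with empty left subtree, and $j_0$ has empty right subtree); such a $T$ exists by embedding this block into any completion on $[r+1]$. Let $T'$ be the rotation of the edge $j_0$–$i_0$. A direct computation from Lemma~\ref{lem:vertices} shows that only $t_{i_0},t_{j_0}$ change, and that rotating a left child $a$ of $b$ with $\mathrm{desc}(b)=[l_b,r_b]$ shifts $v$ by $\bigl(\sum_{l_b\le i\le a,\; b\le j\le r_b}c_{i,j}\bigr)(e_a-e_b)$; here $a=i_0,\,b=j_0,\,l_b=i_0,\,r_b=j_0$, so the shift is $|B_{i_0,j_0}|\,(e_{i_0}-e_{j_0})=0$. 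Thus $v_T=v_{T'}$ with $T\ne T'$, so the map is not injective. The delicate points to verify carefully are the descendant-interval bookkeeping under the rotation (exactly which boxes move between the rectangles $R_{i_0}$ and $R_{j_0}$) and the fact that the changed boxes form the off-diagonal rectangle $[l_b,a]\times[b,r_b]$, collapsing to the single cell $(i_0,j_0)$ for the chosen $T$.
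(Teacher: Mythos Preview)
Your argument is correct. The paper itself gives essentially no proof of this corollary: it is stated as an immediate visualization of Lemma~\ref{lem:vertices} once one writes the numbers $|B_{i,j}|$ in the boxes of $D_{r+1}$ and uses the tree/subdivision bijection, with no separate justification of surjectivity or of the bijectivity criterion. Your proposal follows the same underlying route (Lemma~\ref{lem:vertices} plus the tree correspondence) but actually fills in the details the paper omits: the linear-functional argument showing every vertex of the Minkowski sum comes from some binary search tree even when coefficients vanish, and the explicit rotation construction for the ``only if'' direction. The rotation bookkeeping you set up is correct --- with $a=i_0$ the left child of $b=j_0$ and $[l_b,r_b]=[i_0,j_0]$, the only coordinates that move are $t_{i_0},t_{j_0}$, and the shift is $|B_{i_0,j_0}|(e_{i_0}-e_{j_0})=0$ --- and the embedding of this local subtree into a full tree on $[r+1]$ is straightforward. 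Your aside that $|B_{i,i}|>0$ automatically (since $q_i\in B_{i,i}$) is also a useful clarification the paper does not make explicit.
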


This corollary also suggests a nice way of constructing linear extensions
of $P$, whose $(P,Q)$-subposet vector is the vertex $v_{\Xi}$ of the
posetohedron: Fill rectangle $R_{k}$ with the numbers $t_{1}+...+t_{k-1}+1,...,t_{1}+...+t_{k}$
(i.e. construct an order preserving bijection $\sigma_{k}:\cup_{(i,j)\in R_{k}}B_{i,j}\rightarrow[t_{1}+...+t_{k-1}+1,t_{1}+...+t_{k}]$
for each k, and then combine the $\sigma_{k}$'s to produce a linear extension
of $P$).

Note that for each of the $C_{r+1}=\frac{1}{r+2}{2(r+1) \choose r+1}$
subdivisions $\Xi$ of $D_{r+1}$ produces a vertex of $N_{P,Q}$,
but some of these will coincide if some of the $|B_{i,j}|$ are 0.
For example, if $r=2$ and $b_{13}=0$, then $N_{P,Q}$ will have
4 vertices:

\begin{eqnarray*}
v_{1} & = & \left(b_{11}+b_{12},b_{22}+b_{23},b_{33}\right),\\
v_{2} & = & \left(b_{11}+b_{12},b_{22},b_{23}+b_{33}\right),\\
v_{3} & = & \left(b_{11},b_{12}+b_{22}+b_{23},b_{33}\right),\\
v_{4} & = & \left(b_{11},b_{12}+b_{22},b_{23}+b_{33}\right).\end{eqnarray*}

\bibliographystyle{plain}
\bibliography{posetpoly} 

\begin{thebibliography}{1}

\bibitem{2008arXiv0803.2253C}
D.~{Croitoru}.
\newblock {Diagonal vectors of shifted Young tableaux}.
\newblock {\em ArXiv e-prints}, March 2008.

\bibitem{Knuth:1998:ACP:280635}
Donald~E. Knuth.
\newblock {\em The art of computer programming, volume 3: (2nd ed.) sorting and
  searching}.
\newblock Addison Wesley Longman Publishing Co., Inc., Redwood City, CA, USA,
  1998.

\bibitem{Morton:2009:CRT:1718590.1718591}
Jason Morton, Lior Pachter, Anne Shiu, Bernd Sturmfels, and Oliver Wienand.
\newblock Convex rank tests and semigraphoids.
\newblock {\em SIAM J. Discret. Math.}, 23(3):1117--1134, July 2009.

\bibitem{Postnikov01012009}
A.~Postnikov.
\newblock {Permutohedra, associahedra, and beyond}.
\newblock {\em International Mathematics Research Notices}, 2009(6):1026--1106,
  2009.

\bibitem{Postnikov2008}
A.~Postnikov, V.~Reiner, and L~Williams.
\newblock Faces of generalized permutohedra.
\newblock {\em Documenta Mathematica}, 13:207--273, 2008.

\bibitem{Stanley198156}
Richard~P Stanley.
\newblock Two combinatorial applications of the aleksandrov-fenchel
  inequalities.
\newblock {\em Journal of Combinatorial Theory, Series A}, 31(1):56 -- 65,
  1981.

\bibitem{stanley2000enumerative}
R.P. Stanley.
\newblock {\em Enumerative Combinatorics:}.
\newblock Cambridge studies in advanced mathematics. Cambridge University
  Press, 2000.

\bibitem{stasheff}
J.D. {Stasheff}.
\newblock {Homotopy associativity of $H$-spaces, I, II}.
\newblock {\em Trans. Amer. Math. Soc.}, 108:275--292, 1963.

\bibitem{z-lop-95}
Gunter~M. Ziegler.
\newblock {\em Lectures on polytopes}.
\newblock Springer-Verlag, New York, 1995.

\end{thebibliography}

\end{document}